\theoremstyle{plain}
\newtheorem{theorem}{Theorem}[section]
\newtheorem{corollary}[theorem]{Corollary}
\newtheorem*{theorem-BGcompCrit}{Theorem \ref{thm-BGcompCrit}}
\newtheorem*{theorem-E[a^kb^l]}{Theorem \ref{thm-E[a^kb^l]}}
\newtheorem*{theorem-betaDist}{Theorem \ref{thm-betaDist}}
\newtheorem*{theorem-bgPermComp}{Theorem \ref{thm-bgPermComp}}
\newtheorem*{theorem-3chains}{Theorem \ref{thm-3chains}}
\theoremstyle{definition}
\newtheorem{definition}[theorem]{Definition}
\newtheorem{example}[theorem]{Example}
\newtheorem*{notation}{Notation}
\theoremstyle{remark}
\newtheorem*{remark}{Remark}
\newcommand{\frakS}{\mathfrak{S}}
\newcommand{\pr}{\mathbb{P}}
\newcommand{\E}{\mathbb{E}}
\newcommand{\Var}{\mathbb{V}}
\newcommand{\inv}{\textrm{inv}}
\newcommand{\bG}{\mathfrak{B}}
\newcommand{\bg}{\mathfrak{b}}
\newcommand{\bl}{\boldsymbol{\ell}}
\newcommand{\w}{\omega}
\def\epf{\quad \qed} 
\def\({\left(} 
\def\){\right)}
\def\l[{\left[}
\def\r]{\right]}
\def\vtl{\vartriangleleft}
\definecolor{light-gray}{gray}{0.75}
\title{On comparability of bigrassmannian permutations}
\author{John Engbers\thanks{john.engbers@marquette.edu; Department of Mathematics, Statistics and Computer Science, Marquette University,
Milwaukee, WI 53201, USA. Research supported by the Simons Foundation grant 524418.}
\and
Adam Hammett\thanks{ahammett@cedarville.edu; Department of Science and Mathematics, Cedarville University, Cedarville, OH 45314, USA.}} 
\date{\today}
\begin{document}

\maketitle

\begin{abstract}
Let $\mathfrak{S}_n$ and $\mathfrak{B}_n$ denote the respective sets of ordinary and bigrassmannian (BG) permutations of order $n$, and let $\(\mathfrak{S}_n,\le\)$ denote the Bruhat ordering permutation poset. We study the restricted poset $\(\mathfrak{B}_n,\le\)$, first providing a simple criterion for comparability. This criterion is used to show that that the poset is connected, to enumerate the saturated chains between elements, and to enumerate the number of maximal elements below $r$ fixed elements. It also quickly produces formulas for $\beta(\w)$ ($\alpha(\w)$ respectively), the number of BG permutations weakly below (weakly above respectively) a fixed $\w\in\bG_n$, and is used to compute the M\"obius function on any interval in $\bG_n$.

We then turn to a probabilistic study of $\beta=\beta(\w)$ ($\alpha=\alpha(\w)$ respectively) for the uniformly random $\w\in\bG_n$. 
We show that $\alpha$ and $\beta$ are equidistributed, and that $\beta$ is of the same order as its expectation 
with high probability, but fails to concentrate about its mean. This latter fact derives from the limiting distribution of $\beta/n^3$. 

We also compute the probability that randomly chosen BG permutations form a 2- or 3-element multichain.

\end{abstract}

\section{Introduction}\label{sec-intro}

Let $n\ge 1$ be an integer, and let $[n]:=\{1,2,\ldots,n\}$. Bigrassmannian elements of a Coxeter group are elements that have exactly one left descent and exactly one right descent \cite{GeckKim}. In this paper, we focus on the symmetric group of order $n$ permutations $\frakS_n$, which is a Coxeter group of type $A_{n-1}$. A number of recent papers have studied bigrassmannian elements in $\frakS_n$ and other Coxeter groups, see for example \cite{BrSc,GeckKim,Kobayashi2,Kobayashi,Kobayashi3,Kobayashi4,LascouxSchutz,RWY,WY}. We write an element $\w \in \frakS_n$ in one-line array notation $\w=\w(1)\w(2)\cdots\w(n)$, so that $\w(i)$ is the image of $i$ under $\w$. Here the bigrassmannian (BG) permutations are those elements $\w \in \frakS_n$ such that $\w$ and $\w^{-1}$ admit a unique \emph{descent}, i.e. a unique location-pair $(i,j)\in [n-1]^2$ such that $\w(i)>\w(i+1)$ and $\w^{-1}(j)>\w^{-1}(j+1)$. Let $\bG_n$ denote the set of BG permutations in $\frakS_n$. Then $\w \in \bG_n$ if and only if there is a triple $0 \leq a < b < c \leq n$ such that 
\begin{equation}\label{eq-BGform}
\w = 1\cdots a(b+1)\cdots c(a+1)\cdots b(c+1)\cdots n
\end{equation}
in one-line array notation (see, e.g., \cite[p. 169, ex. 39]{BjornerBrenti1}). Note that $a=0$ and $c=n$ are permitted here. It is clear from (\ref{eq-BGform}) that $\bg_n:=|\bG_n|=\binom{n+1}{3}$.

Recall that the symmetric group $\frakS_n$ equipped with the \emph{Bruhat order} ``$\leq$'' becomes a poset (see, e.g., \cite{BjornerBrenti1}). Here is a precise definition of Bruhat order on the set of permutations $\frakS_n$ (see Stanley \cite[p. 172, ex. 24]{Stanley1}). If $\w\in \frakS_n$ then a \textit{reduction} of $\w$ is a permutation obtained from $\w$ by interchanging some $\w(i)$ with some $\w(j)$ provided $i<j$ and $\w(i)>\w(j)$; in other words, the location-pair $(i,j)$ forms an \emph{inversion} of $\w$. We say that $\pi \leq \sigma$ in the Bruhat order if there is a chain $\sigma=\w_1\to\w_2\to\cdots \to \w_s= \pi$, where each $\w_t$ is a reduction of $\w_{t-1}$. The number of inversions in $\w_t$ strictly decreases with $t$. Indeed, one can show that if $\w_2$ is a reduction of $\w_1$ via the interchange $\w_1(i)\leftrightarrow \w_1(j)$, $i<j$, then
\begin{eqnarray*}
& \inv(\w_1)=\inv(\w_2)+2N(\w_1)+1,\\
&N(\w_1):=|\{ k \, : \, i<k<j,\, \w_1(i)>\w_1(k)>\w_1(j)\}|;
\end{eqnarray*}

\noindent here $\inv(\bullet)$ is the number of inversions in $\bullet$. The Bruhat order notion can be extended to other Coxeter groups \cite{Bjorner}, and bigrassmannian elements have been used to investigate the structure of the Bruhat order \cite{BjornerBrenti2,GeckKim,LascouxSchutz}. 

A large portion of this paper is devoted to studying comparable BG permutations, where comparability is inherited from $\(\frakS_n,\le\)$. Figure \ref{BGPoset} illustrates the poset of BG permutations for $n=3$ and $n=4$.

\begin{figure}[tbh!]
\begin{center}
\includegraphics[trim = 1.55in 8.45in 1.55in 1.15in, clip, scale = .65]{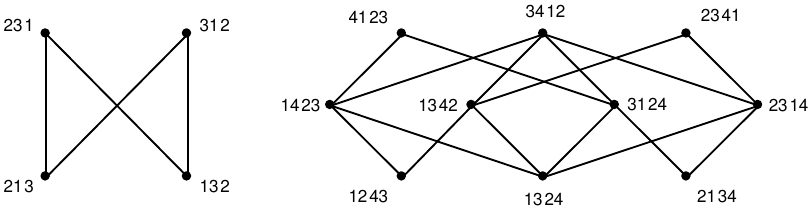}
\end{center}
\caption{The posets $(\bG_3,\le)$ and $(\bG_4,\le)$.}
\label{BGPoset}
\end{figure}

A sequence of reductions, when starting from an element $\sigma\in\bG_n$ (viewing $\sigma \in \frakS_n$), will not necessarily keep us within the collection of BG permutations. However,  there are efficient algorithms for checking Bruhat comparability that do not rely upon this reduction operation. The Ehresmann tableau criterion \cite{Ehresmann} states that $\pi \le \sigma $ if and only if $\pi_{i,j}\le \sigma_{i,j}$ for all $1\le i\le j\le n$, where $\pi_{i,j}$ and $\sigma_{i,j}$ are the $i$th entries in the increasing rearrangement of $\pi(1),\dots,\pi(j) $ and of $\sigma(1),\dots,\sigma(j)$. For example, comparability of the BG permutations $14235<34512$ is verified by entry-wise comparisons of the two tableaux

\[
\begin{array}{c}
\begin{array}{ccccc}
1 & & & & \\ 1 & 4 & & & \\ 1 & 2 & 4 & & \\ 1 & 2 & 3 & 4 & \\ 1 & 2 & 3 & 4 & 5 
\end{array}
\qquad \qquad
\begin{array}{ccccc}
3 & & & & \\ 3 & 4 & & & \\ 3 & 4 & 5 & & \\ 1 & 3 & 4 & 5 & \\ 1 & 2 & 3 & 4 & 5 
\end{array}
\end{array}.
\]

\noindent These tableaux represent \emph{monotone triangles} (or \emph{Gog triangles}, in the terminology of Zeilberger \cite{Zeilberger}) formed from the two permutations. Monotone triangles are well-known to be in bijection with the collection of alternating sign matrices \cite{Bressoud}, which have been of ubiquitous combinatorial interest in recent years, and the size $n$ monotone triangles under entry-wise comparisons constitute the unique MacNeille completion of $(\frakS_n,\le)$ to a lattice \cite[ex. 7.103]{Stanley2}. A. Lascoux and M.~P. Sch\"utzenberger \cite{LascouxSchutz} showed that $\bG_n$ is precisely the set of join-irreducible elements of $(\frakS_n,\le)$, and so by Birkhoff's representation theorem \cite{Birkhoff} the lattice of lower sets of $(\bG_n,\le)$ under containment is order-isomorphic to the lattice of monotone triangles. The lattice of monotone triangles is quite important, but notoriously difficult to say much about. Recently in \cite{EngbersHammett}, the present authors proved that given a set of $r$ independent and uniformly random monotone triangles of size $n$, the probability that the largest element dominated by all of them equals the minimum element of the lattice is asymptotically $r/M_n$ as $n\to\infty$, where $M_n$ is the number of monotone triangles of size $n$. Therefore, it is our hope that understanding $\(\bG_n,\le\)$ can aid in our understanding of the lattice of monotone triangles; this is the perspective taken in \cite{BrSc}.

The Ehresmann tableau criterion requires that $\Theta(n^2)$ conditions be checked. Bj\"orner and Brenti \cite{BjornerBrenti2} discovered an improved tableau criterion (based upon Deodhar's more general Coxeter group characterization in \cite{Deodhar}) that requires far fewer comparisons. For BG permutations, this criterion requires only $O(n)$ comparisons. Indeed, given $\pi,\sigma\in\bG_n$, to determine whether $\pi\le\sigma$ we need only check the row of the two tableaux that corresponds to the unique descent of $\pi$. So in our example above, $14235<34512$ is verified more efficiently by entry-wise comparisons in the singular row

\[
\begin{array}{c}
\begin{array}{cc}
1 & 4  
\end{array}
\qquad \qquad
\begin{array}{cc}
3 & 4   
\end{array}
\end{array}.
\]

Our point of departure for this paper is the following new simple characterization of comparability for BG permutations, which we prove in Section \ref{sec-compcrit}. An alternate characterization of comparability, involving ordered triples, is given by Nathan Reading in \cite{Reading}. To state our characterization, we first define a vector that encapsulates the information contained in (\ref{eq-BGform}).

\begin{definition}\label{def-BGvector}
Let $\sigma \in \bG_n$ with $a$, $b$, and $c$ as in (\ref{eq-BGform}).  We define the \emph{lengths} of $\sigma$ to be $\ell_1(\sigma) := a+1$, $\ell_2(\sigma) := b-a$, $\ell_3(\sigma) := c-b$, and $\ell_4(\sigma) := n+1-c$. The \emph{length vector} of $\sigma$ is $\bl(\sigma):=(\ell_1(\sigma),\ell_2(\sigma),\ell_3(\sigma),\ell_4(\sigma))$. We shall occasionally suppress the argument $\sigma$ and write only $\bl=(\ell_1,\ell_2,\ell_3,\ell_4)$ for the length vector.
\end{definition}

\noindent A couple of notes are in order. First, 
\begin{enumerate}
\item[$(\dagger)$] $\ell_1+\ell_2+\ell_3+\ell_4=n+2,\quad \ell_i\in [n-1]$.
\end{enumerate}
Furthermore any choice of $(\ell_1,\ell_2,\ell_3,\ell_4)$ satisfying $(\dagger)$ corresponds to a unique element of $\bG_n$. 

\begin{theorem}\label{thm-BGcompCrit}
Let $\pi,\sigma\in\bG_n$. The following are equivalent:
\begin{enumerate}
\item[$(1)$] $\pi \leq \sigma$  and
\item[$(2)$] $\ell_1(\pi)\geq \ell_1(\sigma), \ell_4(\pi) \geq \ell_4(\sigma), \ell_2(\pi) \leq \ell_2(\sigma), \ell_3(\pi) \leq \ell_3(\sigma)$.
\end{enumerate}
\end{theorem}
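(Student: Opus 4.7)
The plan is to invoke the Bj\"orner--Brenti improved tableau criterion cited in the introduction, which for $\pi,\sigma\in\bG_n$ reduces $\pi\le\sigma$ to the single-row comparison $\pi_{i,d}\le\sigma_{i,d}$ for $1\le i\le d$, where $d$ is the position of $\pi$'s unique descent. Writing $\bl(\pi)=(\ell_1,\ell_2,\ell_3,\ell_4)$ and $\bl(\sigma)=(m_1,m_2,m_3,m_4)$, a direct inspection of (\ref{eq-BGform}) gives $d=\ell_1+\ell_3-1$ together with the piecewise formula $\pi_{i,d}=i$ for $1\le i\le\ell_1-1$ and $\pi_{i,d}=i+\ell_2$ for $\ell_1\le i\le d$; note in particular that $\pi_{d,d}=\ell_1+\ell_2+\ell_3-1>d$.

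I would then split the argument according to where $d$ falls among the four intervals $[1,m_1-1]$, $[m_1,m_1+m_3-1]$, $[m_1+m_3,m_1+m_2+m_3-1]$, and $[m_1+m_2+m_3,n]$ carved out by $\sigma$'s parameters. In the two outer intervals $\sigma_{d,d}=d$, contradicting $\pi_{d,d}>d$; these intervals are also forbidden under (2), since for instance $d\le m_1-1$ combined with $\ell_1\ge m_1$ forces the impossibility $\ell_3\le 0$. Hence, under either condition of the theorem, $d$ must lie in one of the two middle intervals.

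Inside each middle interval the row $\sigma_{i,d}$ has a two-piece form analogous to that of $\pi_{i,d}$. In the left-middle interval one has $\sigma_{i,d}=i$ for $i\le m_1-1$ and $\sigma_{i,d}=i+m_2$ for $m_1\le i\le d$; the term-by-term comparison $\pi_{i,d}\le\sigma_{i,d}$ is equivalent to $\ell_1\ge m_1$ together with $\ell_2\le m_2$, and combining these with the interval bound $\ell_1+\ell_3\le m_1+m_3$ and the sum relation $(\dagger)$ recovers $\ell_3\le m_3$ and $\ell_4\ge m_4$. An analogous computation in the right-middle interval yields $\ell_3\le m_3$ and $\ell_4\ge m_4$ directly, with the two remaining inequalities of (2) then following from the interval bound and $(\dagger)$. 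Conversely, (2) together with $(\dagger)$ forces $d$ into exactly one of the two middle intervals (depending on the sign of $(\ell_1+\ell_3)-(m_1+m_3)$), and the explicit row comparison in that interval verifies the tableau inequality.

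The main obstacle will be the piecewise bookkeeping: only two of the four inequalities in (2) emerge directly from the row comparison in each middle interval, and the other two must be squeezed out from the interval bound together with $(\dagger)$. Aligning the breakpoints in the two rows, and confirming that the two middle cases jointly cover exactly the parameter regime imposed by (2), is the delicate part of the argument.
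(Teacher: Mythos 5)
Your proposal is correct, and the claimed steps all check out: the descent of $\pi$ is indeed at $d=\ell_1+\ell_3-1$ with $\pi_{d,d}=d+\ell_2>d$, the outer intervals give $\sigma_{d,d}=d$ and are excluded both by comparability and by condition (2), and in each middle interval the single-row comparison is exactly equivalent to two of the four inequalities, with the other two squeezed out of the interval bound and $(\dagger)$. The route differs from the paper's in one substantive way. For the converse direction (condition (2) implies $\pi\le\sigma$) you and the paper do essentially the same thing: apply the Bj\"orner--Brenti row check and split on whether $\ell_1+\ell_3$ is below or at least $m_1+m_3$, i.e.\ your two middle intervals. For the forward direction, however, the paper works with the full Ehresmann tableau: it derives $\ell_1\ge m_1$ and $\ell_2\le m_2$ by direct contradiction arguments on early rows, and then obtains $\ell_3\le m_3$ and $\ell_4\ge m_4$ for free from the conjugation symmetry $\pi\mapsto\bar\pi$ (which reverses both the permutation and the rank and preserves Bruhat comparability), whereas you extract all four inequalities from the single descent row of $\pi$ together with the location of $d$ relative to $\sigma$'s breakpoints. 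Your version is more unified and self-contained --- one criterion, one case analysis, and in each case an exact equivalence between the row comparison and a pair of length-vector inequalities --- at the price of heavier piecewise bookkeeping; the paper's version avoids that bookkeeping in the forward direction by outsourcing half the work to the $\bar\pi$ symmetry. Both are complete proofs once the analogous computations you defer are written out.
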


Theorem \ref{thm-BGcompCrit} allows for a quick count of the number of BG permutations weakly above and below $\sigma\in\bG_n$. We remark that the count below $\sigma$ was studied by Kobayashi \cite{Kobayashi} in the setting of $\sigma \in \frakS_n$.  An equivalent result, discovered independently, occurs as Theorem 31 in \cite{BrSc}.

\begin{notation}
Given $\sigma \in \frakS_n$, let $\beta(\sigma)$ denote the number of $\pi\in\bG_n$ such that $\pi \leq \sigma$ in Bruhat order, and let $\alpha(\sigma)$ denote the number of $\tau\in \bG_n$ such that $\sigma \leq \tau$. 
\end{notation}

For $\sigma \in \bG_n$, in the notation of (\ref{eq-BGform}) we show in Corollary \ref{cor-BGcompCrit} that
\[
\beta(\sigma) = \frac{1}{2}(b-a)(c-b)(c-a) \qquad \text{and} \qquad \alpha(\sigma) = \frac{1}{2}(a+1)(n-c+1)(n-c+1+a+1). 
\]

\noindent Theorem \ref{thm-BGcompCrit} also reveals that $\(\bG_n,\le\)$ is \emph{self-dual}, i.e. $\(\bG_n,\le\)$ is order isomorphic to $\(\bG_n,\ge\)$. We prove this self-duality in Corollary \ref{cor-BGcompCrit}, and discuss how the Hasse diagram of $\(\bG_n,\le\)$ differs from viewing bigrassmannian elements as elements in $\(\frakS_n,\le\)$.

After this, we briefly examine $(\bG_n,\le)$ from a structural perspective in Section \ref{sec-structure}. Previously, Reading \cite{Reading} showed that the poset is ranked and discussed the number of elements in each rank-level and the number of covering relations.  We enumerate the number of chains between two comparable elements and the number of maximal chains (Theorem \ref{thm-number of r chains}). In Theorem \ref{thm-conn} we compute the distance between any two elements, which implies that the poset is connected. The results of Theorems \ref{thm-butterfly} and \ref{thm-inf} describe specific properties of $\(\bG_n,\le\)$ related to how far removed this poset is from being a lattice, and include the enumeration of the number of maximal elements below $r$ fixed elements of the poset (Theorem \ref{thm-inf}). We also compute the M\"obius function on any interval in $\bG_n$, showing that it takes values in the set $\{-2,-1,0,1,3,4\}$ and that all of these values occur for some interval in $\bG_n$ for all $n \geq 6$ (Theorem \ref{thm-Mobius}).

Then we turn to a probabilistic study of $(\bG_n,\le)$. In Section \ref{sec-probability} we analyze a uniformly random element of $\bG_n$, starting with calculation of the product moments $\E\left[ \alpha^k \beta^\ell\right]$. To state this result, let $g_k(t) := \sum_{i \geq 0} i^k t^i$ for each $k \geq 0$ (where $0^0:=1$), $\l[x^j\r]G(x)$ denote the coefficient of $x^j$ in a generating function $G(x)$, and ${\bf 1}_{\{k=0\}}$ denote the indicator of the event $\{k=0\}$.

\begin{theorem}\label{thm-E[a^kb^l]}
For each $k\ge 0$, $\ell\ge 0$, and $n\ge 2$ we have 
\[
\E\l[\alpha^k\beta^\ell\r]=\frac{k!\ell!}{2^{k+\ell}\bg_n}\l[t^{n+2}u^kv^\ell\r]\frac{\(g_k\(te^{(1-t)u}\)-{\bf 1}_{\{k=0\}}\)^2\(g_\ell\(te^{(1-t)v}\)-{\bf 1}_{\{\ell=0\}}\)^2}{(1-t)^{k+\ell}}.
\]
\end{theorem}

\noindent Implicit in Theorem \ref{thm-E[a^kb^l]} is the equidistribution of $\alpha$ and $\beta$, the mean and variance of $\beta$ (Theorem \ref{thm-bgE[beta]}), and the asymptotic moments of all order for these random variables (Theorem \ref{thm-E[a^kb^l]sim}). We also calculate the limiting distribution of $\beta$ ($\alpha$ respectively), properly scaled. Letting $L_1=X$, $L_2=Y-X$, $L_3=Z-Y$ and $L_4=1-Z$ be the random variables arising from rank-ordering three independent and uniformly random points from the unit interval $0<X<Y<Z<1$, we have the following.
 
\begin{theorem}\label{thm-betaDist}
The random variable $\beta/n^3$ converges in distribution to $\frac{1}{2}L_2L_3(L_2+L_3)$. More precisely, for intervals $I=(ni_0,ni_1)\subseteq (0,n)$ and $J=(nj_0,nj_1)\subseteq (0,n)$ we have 
\[
\pr\(\ell_2\in I, \ell_3\in J\)
\to \int_{\substack{i_0<x_1<i_1 \\ j_0<x_2<j_1\\ 0< x_1+x_2 < 1}} 6(1-x_1-x_2) \,dA=\pr\l[L_2\in (i_0,i_1),L_3\in(j_0,j_1)\r],
\]
$n\to\infty$, with error term of order $n^{-1}$.
\end{theorem}
\noindent From this it follows that $\beta$ is not concentrated about its mean, and we also find the scaled limiting moments of $\beta$ (Theorem \ref{thm-betaMomentsIntegral}).

Finally, in Section \ref{sec-comparability} we show how certain precise moment calculations implicit in Theorem \ref{thm-E[a^kb^l]} produce the exact probabilities of comparability among pairs and triples of randomly chosen elements of $\bG_n$.  In particular, we show the following. 

\begin{theorem}\label{thm-bgPermComp}
Let $\pi,\sigma\in\bG_n$ be selected independently and uniformly at random, and let 
\[
p_{n,2}:=\pr\( \pi \textrm{ and } \sigma \textrm{ are comparable} \) \qquad \textrm{and}\qquad  p_{n,2,\leq}:= \pr\( \pi \le \sigma \). 
\]
Then for $n\ge 2$,
\[
p_{n,2}=\frac{\bg_{n+2}-5}{5\bg_n} \qquad \textrm{and}\qquad  p_{n,2,\leq}=\frac{\bg_{n+2}}{10\bg_n}.
\]
\end{theorem}

\begin{theorem}\label{thm-3chains}
Let $\pi, \sigma, \tau\in \bG_n$ be selected independently and uniformly at random, and let
\[
p_{n,3}:=\pr\( \pi,\sigma,\tau \textrm{ are comparable} \) \qquad \textrm{and}\qquad  p_{n,3,\leq}:= \pr\( \pi \le \sigma \le \tau \). 
\]
Then for $n\ge 2$,
\[
p_{n,3}= \frac{(n^2+4n+6)\bg_{n+3}\bg_{n+6} - 42(n+6)(n+7)\bg_{n+2} + 420(n+6)(n+7)}{70(n+6)(n+7)\bg_n^2}
\]
and
\[
p_{n,3,\leq}=\frac{(n^2+4 n+6)\bg_{n+3}\bg_{n+6}}{420(n+6)(n+7)\bg_n^2}.
\]
\end{theorem}

\noindent In fact, we derive both of these as special cases of the result for $(k,\ell)$-stars (Theorem \ref{thm-q_n(k,l)}); we leave the details of this generalization to Section \ref{sec-comparability}. 
From Theorems \ref{thm-bgPermComp} and \ref{thm-3chains} we deduce that the proportion of pairs $(\pi,\sigma)$ with $\pi\le\sigma$ decreases to $\frac{1}{10}$ as $n \to \infty$ (Corollary \ref{cor-bgPermComp}), and the proportion of triples $(\pi,\sigma,\tau)$ with $\pi\le\sigma\le\tau$ decreases to $\frac{1}{420}$ as $n \to \infty$ (Corollary \ref{cor-3chains}).  These results stand in stark contrast to the analogous ``probability-of-comparability'' questions addressed in \cite{Hammett,HammettPittel}, where for uniformly random and independent $\pi,\sigma,\tau\in\frakS_n$ the respective probabilities that $\pi\le\sigma$ and $\pi\le\sigma\le\tau$ were shown to tend to $0$ as $n\to\infty$. 

Henceforth, we shall use both $\(\bG_n,\le\)$ and $\bG_n$ to refer to the poset of BG permutations.

\section{Proof of Theorem \ref{thm-BGcompCrit}}\label{sec-compcrit}

The goal of this section is to prove Theorem \ref{thm-BGcompCrit} and then provide some quick consequences. Before proceeding we mention that bigrassmannian permutations fall into the class of \emph{fully commutative elements}, and Stembridge's papers \cite{Stembridge3,Stembridge2,Stembridge1} provide a bird's eye view of such elements. In particular, his development of the ``heap'' poset there provides an alternative proof of our Theorem \ref{thm-BGcompCrit}. We give a short elementary proof here that relies only on the length vector concept, and fits with the spirit of the comparability results mentioned in Section \ref{sec-intro}. 

To begin, recall that for $\sigma \in \bG_n$ with $a$, $b$, and $c$ as in (\ref{eq-BGform}), the length vector of $\sigma$ is
$\bl(\sigma):=(\ell_1,\ell_2,\ell_3,\ell_4)$ where $\ell_1= a+1$, $\ell_2 = b-a$, $\ell_3 = c-b$, and $\ell_4 = n+1-c$.  Moreover, we have 
\begin{enumerate}
\item[$(\dagger)$] $\ell_1+\ell_2+\ell_3+\ell_4=n+2,\quad \ell_i\in [n-1]$.
\end{enumerate}
And conversely any choice of $(\ell_1,\ell_2,\ell_3,\ell_4)$ satisfying $(\dagger)$ corresponds to a unique element of $\bG_n$. 

\begin{definition}
Let $\sigma\in \bG_n$ have length vector $(\ell_1,\ell_2,\ell_3,\ell_4)$. Define the map $f_{2143}:\bG_n \to \bG_n$ so that $f_{2143}(\sigma)$ is the element of $\bG_n$ with length vector $(\ell_2,\ell_1,\ell_4,\ell_3)$.
\end{definition}

Note that $(f_{2143})^2$ is the identity map, which implies that $f_{2143}$ is a bijection (involution). 
The similarly defined map $f_{1324}$ corresponds to the inverse map on $\bG_n$, and $f_{4321}$ corresponds to the conjugate map $\bar{\sigma}$ (which reverses both the permutation and the rank, in other words, is defined by $\bar{\sigma}(i) = n+1-\sigma(n+1-i)$). In fact, we can define the bijection $f_\phi$ on $\bG_n$ for any $\phi \in \frakS_4$.

\begin{example}
Consider $\sigma = 15234678 \in \bG_8$, 
which has length vector $(2,3,1,4)$.  Then $f_{2143}(\sigma)$ has length vector $(3,2,4,1)$, and so $f_{2143}(\sigma) = 12567834$.
\end{example}

Here is now the key theorem, which we restate from Section \ref{sec-intro}, that reframes comparability entirely in terms of the coordinates of the length vector.  

\begin{theorem-BGcompCrit}
Let $\pi,\sigma\in\bG_n$. The following are equivalent:
\begin{enumerate}
\item[$(1)$] $\pi \leq \sigma$  and
\item[$(2)$] $\ell_1(\pi)\geq \ell_1(\sigma), \ell_4(\pi) \geq \ell_4(\sigma), \ell_2(\pi) \leq \ell_2(\sigma), \ell_3(\pi) \leq \ell_3(\sigma)$.
\end{enumerate}
\end{theorem-BGcompCrit}

\begin{proof}
Throughout the proof, for brevity we write $\ell_i:=\ell_i(\pi)$ and $m_i:=\ell_i(\sigma)$, $i\in [4]$. Notice that $\pi(i)=i$ for $1 \leq i \leq \ell_1-1$, $\pi(i) = i+\ell_2$ for $\ell_1 \leq i \leq \ell_1+\ell_3-1$, $\pi(i) = i-\ell_3$ for $\ell_1+\ell_3 \leq i \leq \ell_1+\ell_2+\ell_3-1$, and $\pi(i) = i$ for $\ell_1+\ell_2+\ell_3 \leq i \leq n$.  Similarly, we have $\sigma(i)=i$ for $1 \leq i \leq m_1-1$, $\sigma(i) = i+m_2$ for $m_1 \leq i \leq m_1+ m_3-1$, $\sigma(i) = i- m_3$ for $m_1+ m_3 \leq i \leq m_1+m_2+ m_3-1$, and $\sigma(i) = i$ for $m_1+m_2+ m_3 \leq i \leq n$.

Suppose first that $\pi \leq \sigma$. Then by the Ehresmann tableau criterion the first $i$ elements of $\pi$ (in increasing order) are at most the first $i$ elements of $\sigma$ for each $i$. Suppose that $\ell_1<m_1$. Then $\pi(\ell_1) > \ell_1 = \sigma(\ell_1)$, which combined with $\sigma(i) = \pi(i) = i$ for $1 \leq i < \ell_1$ creates a contradiction.  Therefore $\ell_1\ge m_1$. Next, suppose that $\ell_2>m_2$. Then $\pi(\ell_1) = \ell_1+\ell_2$, and $\sigma(i) \leq i+m_2<i+\ell_2$ for all $1 \leq i \leq \ell_1$.  Therefore by focusing on the first $\ell_1$ terms of $\pi$ and $\sigma$ in increasing order we see that $\pi \nleq \sigma$. This contradiction implies that $\ell_2\le m_2$. Now, recall that $\bar{\pi}$ reverses the permutation and the rank of $\pi$.  Then it is easy to see, using the Ehresmann tableau criterion, that $\pi \leq \sigma$ if and only if $\bar{\pi} \leq \bar{\sigma}$.  Briefly, this is because reversing the rank of the permutations reverses the original directions of the entry-wise inequalities, and then reversing the rank-reversed permutations simply puts these inequalities back to their original directions. Combining this observation with the first two inequalities gives the remaining two inequalities.

\medskip

Suppose next that $\ell_1 \geq  m_1$, $\ell_4 \geq  m_4$, $\ell_2 \leq  m_2$, and $\ell_3 \leq  m_3$. We want to show that $\pi \leq \sigma$.  By the Bj\"{o}rner-Brenti criterion, we only need to show that at position $\ell_1+\ell_3-1$ (the position of the unique descent of $\pi$) the entries of $\pi$ in increasing order are at most the entries of $\sigma$ in increasing order. The entries of $\pi$ are $1,2,\ldots,\ell_1-1,\ell_1+\ell_2,\ell_1+\ell_2+1,\ldots,\ell_1+\ell_2+\ell_3-1$.  What are the first $\ell_1+\ell_3-1$ entries, in order, of $\sigma$? Since $\ell_4 \geq m_4$, we know that $\ell_1+\ell_2+\ell_3 \leq  m_1+ m_2+ m_3$. This means that the entries for $\sigma$ form two intervals, the lower starting at $1$ and the higher starting at $ m_1+ m_2$.  If $\ell_1+\ell_3 <  m_1+ m_3$, meaning the position of the descent for $\sigma$ is larger than the position of the descent for $\pi$, then the result follows from $\ell_1 \geq  m_1$ and $\ell_2 \leq  m_2$ (as the entries from $\sigma$ are $1$, $\ldots$, $m_1-1$, $m_1+m_2$, $\ldots$, $\ell_1+\ell_3-1+m_2$). If $\ell_1+\ell_3 \geq m_1 + m_3$, then since $\ell_1+\ell_2+\ell_3 \leq m_1 + m_2 + m_3$ the higher interval for $\sigma$ goes from $m_1+m_2$ to $m_1 + m_2 + m_3-1$ and the lower interval goes from $1$ to $\ell_1+\ell_3-m_3-1$. Since $m_3 \geq \ell_3$, again the result follows.  These are all possible cases, and so the theorem is proved.
\end{proof}

To summarize, comparability within $\bG_n$ is determined entirely by analyzing length vectors. The following is an important consequence of Theorem \ref{thm-BGcompCrit}. 

\begin{corollary}\label{cor-BGcompCrit}
Let $\pi, \sigma \in \bG_n$.  Then 
\begin{enumerate}
\item[$(1)$] $\pi \leq \sigma$ if and only if $f_{2143}(\pi) \geq f_{2143}(\sigma)$,
\item[$(2)$] $\beta(\sigma) = \frac{1}{2}\ell_2 \ell_3(\ell_2+\ell_3)$,
\item[$(3)$] $\alpha(\sigma) = \frac{1}{2}\ell_1 \ell_4(\ell_1+\ell_4)$,
\item[$(4)$] $\beta(\sigma) = \alpha(f_{2143}(\sigma))$.
\end{enumerate}
\end{corollary}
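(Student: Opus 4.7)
The plan is to derive all four parts directly from Theorem \ref{thm-BGcompCrit}, which reduces comparability in $\bG_n$ to coordinate-wise inequalities on length vectors. Since $f_{2143}$ is defined by a specific permutation of those coordinates, and since $\alpha$ and $\beta$ are themselves sums over length vectors satisfying the coordinate inequalities, all four assertions should follow by essentially direct bookkeeping.

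For Part $(1)$, I would simply read off the length vector of $f_{2143}(\pi)$, namely $(\ell_2(\pi),\ell_1(\pi),\ell_4(\pi),\ell_3(\pi))$, and likewise for $f_{2143}(\sigma)$. Applying Theorem \ref{thm-BGcompCrit} to the pair $f_{2143}(\sigma) \leq f_{2143}(\pi)$ produces the four inequalities $\ell_2(\pi) \leq \ell_2(\sigma)$, $\ell_3(\pi) \leq \ell_3(\sigma)$, $\ell_1(\pi) \geq \ell_1(\sigma)$, and $\ell_4(\pi) \geq \ell_4(\sigma)$, which are precisely the conditions the same theorem gives for $\pi \leq \sigma$.

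For Parts $(2)$ and $(3)$, I would count length vectors satisfying the required inequalities. Letting $\pi$ have length vector $(\ell_1,\ell_2,\ell_3,\ell_4)$, the quantity $\beta(\pi)$ equals the number of quadruples $(m_1,m_2,m_3,m_4)$ of positive integers with $m_1+m_2+m_3+m_4 = n+2$, $m_1 \geq \ell_1$, $m_4 \geq \ell_4$, $m_2 \leq \ell_2$, and $m_3 \leq \ell_3$. The upper bound $m_i \leq n-1$ from $(\ddagger)$ is automatic, since the other three coordinates are each at least $1$. Substituting $s_2 = \ell_2 - m_2$ and $s_3 = \ell_3 - m_3$ and using $\ell_1+\ell_4 = n+2-\ell_2-\ell_3$, the pairs $(m_1,m_4)$ with $m_1 \geq \ell_1$, $m_4 \geq \ell_4$, and $m_1+m_4 = \ell_1+\ell_4+s_2+s_3$ number exactly $s_2+s_3+1$. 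This reduces the count to
\begin{equation*}
\beta(\pi) = \sum_{s_2=0}^{\ell_2-1}\sum_{s_3=0}^{\ell_3-1}(s_2+s_3+1) = \tfrac{1}{2}\ell_2\ell_3(\ell_2+\ell_3),
\end{equation*}
by elementary summation. Part $(3)$ is entirely analogous, with the roles of the index-pairs $\{1,4\}$ and $\{2,3\}$ swapped.

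Part $(4)$ then admits two equally short routes, and I would use whichever reads more cleanly. Either plug the length vector $(\ell_2,\ell_1,\ell_4,\ell_3)$ of $f_{2143}(\pi)$ into the formula from Part $(3)$ to obtain $\alpha(f_{2143}(\pi)) = \tfrac{1}{2}\ell_2\ell_3(\ell_2+\ell_3) = \beta(\pi)$, or invoke Part $(1)$: because $f_{2143}$ is an order-reversing involution on $\bG_n$, it restricts to a bijection between $\{\rho \in \bG_n : \rho \leq \pi\}$ and $\{\tau \in \bG_n : \tau \geq f_{2143}(\pi)\}$. No step here looks genuinely hard; the only small point worth a sentence is confirming that the upper bound in $(\ddagger)$ imposes no additional constraint during the counting for Parts $(2)$ and $(3)$.
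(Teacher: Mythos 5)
Your proposal is correct and follows essentially the same route as the paper: part $(1)$ is read off from Theorem \ref{thm-BGcompCrit}, part $(2)$ counts admissible length vectors by moving $i+j$ units from the middle coordinates to the outer ones in $i+j+1$ ways (your $s_2,s_3$ substitution is just a more explicit version of this), part $(3)$ is symmetric, and part $(4)$ follows by substitution. Your extra check that the upper bound in $(\ddagger)$ is vacuous, and the alternative bijective argument for $(4)$, are fine but not needed.
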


\begin{proof}
(1) is clear from Theorem \ref{thm-BGcompCrit}. We will prove (2) only, as (3) follows from a similar argument, and the combination of (2) and (3) imply (4).

Suppose that $\sigma$ has length vector $(\ell_1,\ell_2,\ell_3,\ell_4)$.  To find $\beta(\sigma)$, we must find all elements of $\bG_n$ with length vectors $(m_1,m_2,m_3,m_4)$ so that $\ell_1 \leq m_1$, $\ell_2 \geq m_2$, $\ell_3 \geq m_3$, and $\ell_4 \leq m_4$. We imagine removing $i$ from $\ell_2$ and $j$ from $\ell_3$ and putting these on the outer two lengths; there are $i+j+1$ ways that this can be done.  Therefore
\begin{equation*}
\beta(\sigma) = \sum_{\substack{0\leq i \leq \ell_2-1\\ 0 \leq j \leq \ell_3-1}} (i+j+1) = \ell_2\binom{\ell_3}{2} + \ell_3\binom{\ell_2}{2} + \ell_2\ell_3 = \frac{1}{2}\ell_2\ell_3(\ell_2+\ell_3). \qedhere
\end{equation*} 
\end{proof}

As we mentioned in Section \ref{sec-intro}, items (1) and (4) of Corollary \ref{cor-BGcompCrit} illuminate the self-duality of the BG poset. 
See Figure \ref{B5Poset}, where we have highlighted the \emph{down-set} of $41235$ (i.e., the set of all $\pi\in\bG_5$ with $\pi\le 41235$) as well as the \emph{up-set} of $12453$ to illustrate 
\[
\beta(41235)=\alpha\(f_{2143}(41235)\)=\alpha(12453)=6.
\]

\begin{figure}[tbh!]
\begin{center}
\includegraphics[trim = 1.15in 6.70in 1.95in 1.15in, clip, scale = .65]{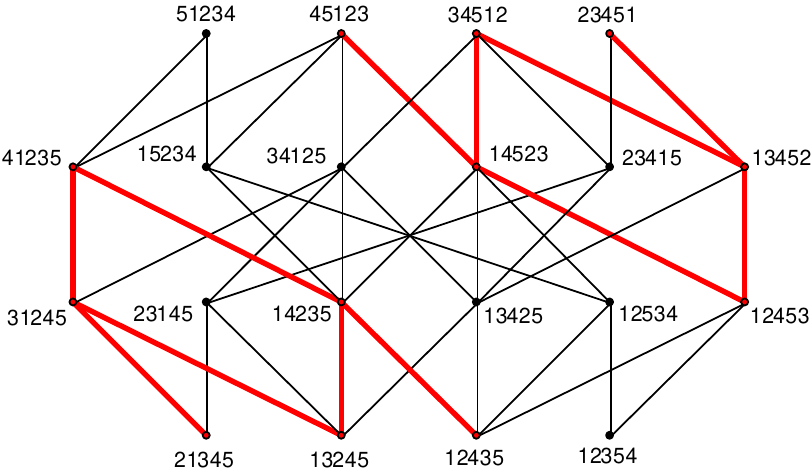}
\end{center}
\caption{The poset $(\bG_5,\le)$ with down-set of $41235$ and up-set of $12453$ highlighted.}
\label{B5Poset}
\end{figure}

We mention that $\(\bG_n,\le\)$ does not embed into $\(\frakS_n,\le\)$ while preserving the rank-function (recall that the rank-function for $\(\frakS_n,\le\)$ is given by the number of inversions). 
As an example, the incomparable BG permutations $\pi=41235\cdots n$ and $\sigma=34125\cdots n$ have respective length vectors $\bl\(\pi\)=(1,3,1,n-3)$ and $\bl\(\sigma\)=(1,2,2,n-3)$, and are at the same rank-level $\lambda\(\pi\)=\lambda\(\sigma\)=2$ in $\(\bG_n,\le\)$. However, as $\inv\(\pi\)=3$ and $\inv\(\sigma\)=4$, they reside at different rank-levels in $\(\frakS_n,\le\)$.

\section{Structural Properties}\label{sec-structure}

In this section, we present several structural features of $(\bG_n,\le)$ that are consequences of Theorem \ref{thm-BGcompCrit}.
For $\pi,\sigma\in\bG_n$, we say that $\sigma$ \emph{covers} $\pi$ and write ``$\pi\vtl\sigma$'' provided that $\pi<\sigma$ and there is no intermediate element $\rho\in\bG_n$ with $\pi<\rho<\sigma$. 

Several properties of the Hasse diagram for $\(\bG_n,\le\)$ appear in Reading \cite[\S 8]{Reading}.  In particular, the number of elements covering (or covered by) a fixed $\sigma \in \bG_n$, the total number of covering relations, the number of minimal and maximal elements, the rank function, and the number of elements at a fixed rank are explicit or implicit in the discussion presented there. For completeness, we state these in terms of our comparability criterion in Corollaries \ref{cor-degree} and \ref{cor-ranked}. (Reading also considered antichains in $\bG_n$, showing that the size of the largest antichain is $\lfloor \frac{n}{2} \rfloor \lceil \frac{n}{2} \rceil$ by providing a symmetric chain decomposition \cite{Reading}.)

\begin{corollary}\label{cor-degree}
 In $(\bG_n,\le)$, each element is covered by 0, 2, or 4 elements, and each element covers 0, 2, or 4 elements.  There are $4\binom{n}{3}$ covering relations in $(\bG_n,\le)$, $n>2$.
\end{corollary}

\begin{proof}
Elements covered by $\sigma$ have $1$ subtracted from the second or third entry in the length vector and added to the first or fourth entry, elements covering $\sigma$ have $1$ subtracted from the first or fourth entry and added to the second or third entry, and all entries must be positive.

Thus for fixed $\sigma$ with length vector $(\ell_1,\ell_2,\ell_3,\ell_4)$, the total number of elements $\sigma$ covers is either $4$, $2$, or $0$. More precisely $\sigma$ covers a total of $$4 - 2\cdot\textbf{1}_{\{\ell_2=1\}} - 2\cdot\textbf{1}_{\{\ell_3=1\}}$$ elements. Summing this quantity over all possible $\sigma$ we get that the total number of covering relations is 
\[
4\binom{n+1}{3} - 2\binom{n}{2} - 2\binom{n}{2} = 4\binom{n}{3}.\qedhere
\]
\end{proof}

It is also clear that there are $n-1$ minimal and $n-1$ maximal elements in $(\bG_n,\le)$, as these require either the two middle coordinates to be $1$ (minimal) or the two outer coordinates to be $1$ (maximal). 

\begin{corollary}\label{cor-ranked}
$(\bG_n,\le)$ is a ranked poset with rank-function given by 
$\lambda\(\sigma\):=\ell_2+\ell_3-2.$
Given $0\le k \le n-2$, there are $(k+1)(n-k-1)$ elements of $(\bG_n,\le)$ with rank-level $k$.
\end{corollary}

\begin{proof}
The first statement comes directly from the comparability criterion. To count the number of $\sigma\in\bG_n$ with $k=\lambda\(\sigma\)=\ell_2+\ell_3-2$, we must count the number of ways to satisfy $\ell_2+\ell_3=k+2$ and $\ell_1+\ell_4=n-k$, with each $\ell_i\ge 1$. There are $k+1$ solutions to the first equation, and $n-k-1$ solutions to the second. 
\end{proof}

\begin{remark}
Corollary \ref{cor-ranked} provides a bijective proof of the identity $\sum_{i=1}^{n-1}i(n-i)=\binom{n+1}{3}$. Indeed, the right side is the number of BG permutations of order $n$, and the left side counts these same permutations by their rank-level $k=i-1$ for $1\le i \le n-1$. For a geometric version of this interpretation of the binomial identity, see \cite[\S 3]{Propp}.
\end{remark}

The remainder of this section presents new structural results for $\bG_n$. We begin with a definition.

\begin{definition}
A \emph{saturated $r$-chain} in $(\bG_n,\le)$ is a set of elements $\pi_1,\ldots,\pi_r \in \bG_n$ with $\pi_1 \vtl \cdots \vtl \pi_r$.
\end{definition}

\noindent We count the number of saturated $r$-chains between two fixed elements as well as the number of \emph{maximal chains} in $(\bG_n,\le)$, i.e., the number of saturated chains connecting a minimal and maximal pair of elements in the BG poset.  These are saturated $(n-1)$-chains in $(\bG_n,\le)$ by Corollary \ref{cor-ranked}.

\begin{theorem}\label{thm-number of r chains}
Let $\pi,\sigma \in \bG_n$ have respective length vectors $(\ell_1,\ell_2,\ell_3,\ell_4)$ and \\
$(m_1,m_2,m_3,m_4)$, and suppose $\pi \leq \sigma$. Let $r = \ell_1-m_1+\ell_4-m_4$. Then there are $\binom{r}{\ell_1-m_1}\binom{r}{m_2-\ell_2}$ saturated $r$-chains between $\pi$ and $\sigma$.  There are $4^{n-2}$ maximal chains in $\(\bG_n,\le\)$.
\end{theorem}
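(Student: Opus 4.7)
The plan is to encode each saturated $r$-chain between $\pi$ and $\sigma$ as a pair of independent sequences on length vectors. First I would combine Theorem \ref{thm-ranked} with $\ell_1+\ell_2+\ell_3+\ell_4=n+2$ to observe that a single cover $\rho\vtl\rho'$ in $\bG_n$ changes the length vector by decrementing exactly one of the outer coordinates in $\{\ell_1,\ell_4\}$ and incrementing exactly one of the inner coordinates in $\{\ell_2,\ell_3\}$ (this is already implicit in the proof of Theorem \ref{thm-degree}). Conversely, by Theorem \ref{thm-BGcompCrit}, any length vector obtained from $\rho$ via such a move gives an element $\rho'\in\bG_n$ with $\rho\vtl\rho'$, as long as the resulting coordinates remain positive.

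With $\pi\leq\sigma$ and length vectors as in the statement, $(\dagger)$ gives $r=\ell_1+\ell_4-m_1-m_4=(m_2-\ell_2)+(m_3-\ell_3)$. I would then set up a bijection between saturated $r$-chains $\pi=\pi_0\vtl\pi_1\vtl\cdots\vtl\pi_r=\sigma$ and pairs $(\mathbf{s},\mathbf{t})$, where $\mathbf{s}=(s_1,\ldots,s_r)\in\{1,4\}^r$ records which outer coordinate is decremented at each step and $\mathbf{t}=(t_1,\ldots,t_r)\in\{2,3\}^r$ records which inner coordinate is incremented. Matching the net change from $\bl(\pi)$ to $\bl(\sigma)$ forces $\mathbf{s}$ to contain the symbol $1$ exactly $\ell_1-m_1$ times (and thus $4$ exactly $\ell_4-m_4$ times) and $\mathbf{t}$ to contain the symbol $2$ exactly $m_2-\ell_2$ times (and thus $3$ exactly $m_3-\ell_3$ times).

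To confirm this really is a bijection, I would verify that every such pair yields a legal chain: the inner coordinates $\ell_2,\ell_3$ only increase from positive values, while the outer coordinates $\ell_1,\ell_4$ decrease monotonically toward $m_1,m_4\geq 1$, so every intermediate length vector has all entries positive, and (again by Theorem \ref{thm-BGcompCrit}) lies weakly between $\pi$ and $\sigma$. Since the source and target decisions at each step act on disjoint coordinates of the length vector, they are completely independent. Counting then gives $\binom{r}{\ell_1-m_1}$ choices of $\mathbf{s}$ and $\binom{r}{m_2-\ell_2}$ choices of $\mathbf{t}$, producing $\binom{r}{\ell_1-m_1}\binom{r}{m_2-\ell_2}$ saturated $r$-chains in total. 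The main (and really only) substantive point is the independence of source and target choices at each step; once that is in hand, the counting is immediate.
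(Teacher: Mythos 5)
Your proposal is correct and follows essentially the same route as the paper: both arguments observe that each cover step decrements one outer and increments one inner coordinate of the length vector, and then count the two independent sequences of choices to get $\binom{r}{\ell_1-m_1}\binom{r}{m_2-\ell_2}$. Your write-up is more careful than the paper's (which asserts the count in one sentence), in particular in verifying that every pair of sequences yields a legal saturated chain because no intermediate coordinate can drop below $1$.
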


\begin{proof}
The saturated chains between $\pi$ and $\sigma$ all have length $r$, as this is the total amount to remove from the outer coordinates $\{\ell_1,\ell_4\}$ and add to the inner coordinates $\{\ell_2,\ell_3\}$ in such a way that the resulting length vector is $(m_1,m_2,m_3,m_4)$. Moreover, there are $\binom{r}{\ell_1-m_1}$ ways to choose the order to remove things from $\ell_1$ and $\ell_4$, and $\binom{r}{m_2-\ell_2}$ ways to add things to $\ell_2$ and $\ell_3$. This gives $\binom{r}{\ell_1-m_1}\binom{r}{m_2-\ell_2}$ saturated $r$-chains. For any pair of minimal and maximal elements $\pi,\sigma\in\bG_n$, with respective length vectors $\bl(\pi)=(\ell_1,1,1,\ell_4)$ and $\bl(\sigma)=(1,m_2,m_3,1)$, notice that we have $\pi\le\sigma$ by Theorem \ref{thm-BGcompCrit}. 
Then the number of maximal chains is
\[
\sum_{\substack{1\le \ell_1\le n-1 \\ 1\le m_2\le n-1 }}\binom{n-2}{\ell_1-1}\binom{n-2}{m_2-1}=\(\sum_{k=0}^{n-2}\binom{n-2}{k}\)^2=4^{n-2}.\qedhere
\]
\end{proof}

We also find the distance between two fixed elements $\pi,\sigma\in\bG_n$ in the Hasse diagram. This is a minimal length \emph{Hasse walk} between $\pi$ and $\sigma$, i.e. a sequence of BG elements $\pi=\pi_1,\pi_2,\ldots,\pi_r=\sigma$ such that either $\pi_{i+1}$ covers $\pi_i$ or vice versa, where $1\le i <r$ and $r$ is minimal. 

\begin{theorem}\label{thm-conn}
Let $\pi, \sigma \in \bG_n$ have respective length vectors $(\ell_1,\ell_2,\ell_3,\ell_4)$ and \\
$(m_1,m_2,m_3,m_4)$. Then the distance between $\pi$ and $\sigma$ in the Hasse diagram for $\bG_n$ is 
\[
\max\{|\ell_1-m_1|+|\ell_4-m_4|,|\ell_2-m_2|+|\ell_3-m_3|\}. 
\]
In particular, the Hasse diagram for $\bG_n$ is connected.
\end{theorem}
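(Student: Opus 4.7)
Set $d_i := \ell_i - m_i$ and write $x_+ := \max(x,0)$, $x_- := \max(-x,0)$. My plan is to sandwich the Hasse-graph distance $N$ between matching lower and upper bounds. From the analysis in the proof of Theorem \ref{thm-degree}, every Hasse edge modifies a length vector by shifting one unit between the pair $\{\ell_1,\ell_4\}$ and the pair $\{\ell_2,\ell_3\}$, with exactly one coordinate in each pair changing by $\pm 1$.

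For the lower bound, take any walk $\pi=\pi_0,\ldots,\pi_N=\sigma$ and let $u_i^+$ (resp.\ $u_i^-$) denote the number of edges that increase (resp.\ decrease) $\ell_i$. Then $u_i^- - u_i^+ = d_i$, so $u_i^++u_i^-\ge |d_i|$. Since each edge contributes to exactly one such count on each side, summing over $i\in\{1,4\}$ gives $N\ge |d_1|+|d_4|$; the analogous argument on the pair $\{2,3\}$ yields $N\ge|d_2|+|d_3|$. Hence $N\ge\max\{|d_1|+|d_4|,\,|d_2|+|d_3|\}$.

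For the upper bound I construct an explicit walk realizing this maximum. The involution $f_{2143}$ reverses the Bruhat order by Corollary \ref{cor-BGcompCrit}(1) and hence induces an automorphism of the undirected Hasse diagram that swaps the two coordinate pairs; this lets me assume without loss of generality that $|d_1|+|d_4|\ge|d_2|+|d_3|$. I will produce a common upper bound $\nu\in\bG_n$ of $\pi$ and $\sigma$ whose length vector satisfies $\ell_1(\nu)=\min(\ell_1,m_1)$ and $\ell_4(\nu)=\min(\ell_4,m_4)$. Given such $\nu$, Theorem \ref{thm-number of r chains} supplies a saturated chain from $\pi$ to $\nu$ of length $(d_1)_++(d_4)_+$ and one from $\sigma$ to $\nu$ of length $(d_1)_-+(d_4)_-$, whose concatenation is a walk in the Hasse diagram of total length $|d_1|+|d_4|$.

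The main obstacle is producing $\nu$. Its remaining coordinates $\ell_2(\nu),\ell_3(\nu)$ must satisfy $\ell_2(\nu)\ge\max(\ell_2,m_2)$, $\ell_3(\nu)\ge\max(\ell_3,m_3)$, each lie in $[n-1]$, and (by $(\dagger)$) sum to $n+2-\min(\ell_1,m_1)-\min(\ell_4,m_4)$. Feasibility reduces to verifying
\[
\max(\ell_2,m_2)+\max(\ell_3,m_3)\le n+2-\min(\ell_1,m_1)-\min(\ell_4,m_4),
\]
which, after applying $\sum_i(\ell_i+m_i)=2(n+2)$, rearranges to $(d_1)_++(d_4)_+\ge(d_2)_-+(d_3)_-$. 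Because $\sum_i d_i=0$, the expressions $(d_1)_++(d_4)_+-(d_2)_--(d_3)_-$ and $(d_1)_-+(d_4)_--(d_2)_+-(d_3)_+$ are equal, and their common sum is $|d_1|+|d_4|-|d_2|-|d_3|\ge 0$, so both are non-negative. Thus the displayed inequality holds under the assumption $|d_1|+|d_4|\ge|d_2|+|d_3|$. The range constraints $\ell_j(\nu)\in[n-1]$ then follow routinely since each $\max(\ell_j,m_j)\le n-1$ and the prescribed sum does not exceed $n$. Connectivity of the Hasse diagram is an immediate corollary.
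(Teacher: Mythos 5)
Your proof is correct. The lower bound coincides with the paper's: each Hasse edge shifts one unit between the outer pair $\{\ell_1,\ell_4\}$ and the inner pair $\{\ell_2,\ell_3\}$, so every walk has length at least the stated maximum. Where you genuinely diverge is the upper bound. The paper constructs a walk directly from $\pi$ to $\sigma$ by greedily transferring the surplus of $\ell_1$ (and then of $\ell_4$) into whichever inner coordinate is still deficient, and dismisses the case where the inner pair attains the maximum as ``similar.'' You instead use the order-reversing involution $f_{2143}$ to reduce to the case $|\ell_1-m_1|+|\ell_4-m_4|\ge|\ell_2-m_2|+|\ell_3-m_3|$, then route the walk through a common Bruhat upper bound $\nu$ and concatenate two saturated chains supplied by Theorem \ref{thm-number of r chains}. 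The payoff of your route is that the only thing left to check is the existence of $\nu$, which you reduce to the single inequality $(d_1)_++(d_4)_+\ge(d_2)_-+(d_3)_-$ and verify cleanly from $\sum_i d_i=0$; the paper's greedy walk leaves the analogous ``the construction never gets stuck and terminates exactly at $\sigma$'' verification implicit. One small remark: the paper's definition of a saturated $r$-chain counts elements, whereas Theorem \ref{thm-number of r chains} uses $r$ for the rank difference, i.e., the number of cover steps; your reading ($r$ edges, hence a concatenated walk of total length $|d_1|+|d_4|$) is the one consistent with that theorem, so your count is right, but be aware of the off-by-one in the paper's terminology.
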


\begin{proof}
Since moving along an edge in the Hasse diagram changes exactly one inner and one outer coordinate by $1$, the distance is at least this maximum value. To show that this value is attained, suppose first that $|\ell_1-m_1|+|\ell_4-m_4|$ is the maximum value.  Assume that $|\ell_1-m_1|\geq |\ell_4-m_4|$ and $\ell_1 > m_1$. Iteratively remove $\ell_1-m_1$ from the first coordinate of $\pi$ and add to either of $\ell_2$ or $\ell_3$ that is less than $m_2$ or $m_3$, respectively (if both are at least $m_2$ or $m_3$, respectively, then add to either coordinate). After $\ell_1-m_1$ steps we have $(m_1,n_2,n_3,\ell_4)$.  Repeating this procedure with the fourth coordinate (if $\ell_4<m_4$ we subtract from $n_2$ and $n_3$) gives the result. The case where $|\ell_2-m_2|+|\ell_3-m_3|$ is the maximum value is similar. 
\end{proof}

The poset of BG permutations is not a lattice.  In fact, we can enumerate the exact number of \emph{lattice-obstructions} in the Hasse diagram for $\bG_n$. By this we mean a set $\{\pi_1,\pi_2,\sigma_1,\sigma_2\}\subseteq\bG_n$ with $\pi_1$ and $\pi_2$ having the same rank-level $k$ (and so are incomparable), $\sigma_1$ and $\sigma_2$ having rank-level $k+1$, and $\pi_i \vtl \sigma_j$, $i,j\in[2]$. Such a substructure is, indeed, a lattice-obstruction since $\sigma_1$ and $\sigma_2$ will have no infimum. Likewise, $\pi_1$ and $\pi_2$ will have no supremum. For brevity, let us refer to these lattice-obstructions as \emph{butterflies}, since they resemble a butterfly; see $(\bG_3,\le)$ in Figure \ref{BGPoset} for an example.

\begin{theorem}\label{thm-butterfly}
There are $\binom{n}{3} + \binom{n-2}{3}$ butterflies in the Hasse diagram for $\bG_n$. 
Furthermore, each Hasse edge $\pi \vtl \sigma$ is in either one or two butterflies.  The edge $\pi \vtl \sigma$ is in a unique butterfly if and only if there is some fixed coordinate $i \in [4]$ such that $\ell_i(\pi) = \ell_i(\sigma) = 1$. 
\end{theorem}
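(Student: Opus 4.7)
The plan is to reframe everything through length vectors via Theorem~\ref{thm-BGcompCrit}: since $\ell_1+\ell_2+\ell_3+\ell_4=n+2$ is constant, a Hasse cover $\pi\vtl\sigma$ in $\bG_n$ corresponds to a \emph{move} $(u,v)\in\{1,4\}\times\{2,3\}$ satisfying $\bl(\sigma)=\bl(\pi)+e_v-e_u$, where $e_i$ denotes the $i$th standard basis vector in $\mathbb{Z}^4$. A butterfly $\{\pi_1,\pi_2,\sigma_1,\sigma_2\}$ is then determined by two distinct moves from $\pi_1$ producing $\sigma_1$ and $\sigma_2$, together with a second bottom element $\pi_2$ covered by both $\sigma_1$ and $\sigma_2$.

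The first step is to classify butterflies by the unordered pair of moves emanating from $\pi_1$. A short case analysis over the $\binom{4}{2}=6$ such pairs rules out the two ``diagonal'' pairs $\{(1,2),(4,3)\}$ and $\{(1,3),(4,2)\}$: for each of the four candidate reversals $\pi_2=\sigma_1-e_{k'}+e_{i'}$, a direct computation shows that $\bl(\sigma_2)-\bl(\pi_2)$ has three nonzero coordinates and hence cannot equal $e_v-e_u$ for any valid $(u,v)$. The remaining four pairs split into \emph{Case~A} (shared outer coordinate $u$, with moves $(u,2)$ and $(u,3)$, uniquely forcing $\bl(\pi_2)=\bl(\pi_1)-e_u+e_{u'}$) and \emph{Case~B} (shared inner coordinate $v$, with moves $(1,v)$ and $(4,v)$, uniquely forcing $\bl(\pi_2)=\bl(\pi_1)+e_v-e_{v''}$), where $u'$ and $v''$ denote the ``other'' outer and inner coordinates, respectively. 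I expect this classification to be the main obstacle, since it requires both ruling out the diagonal pairs and verifying uniqueness of $\pi_2$ in Cases~A and~B by explicit algebra in $\mathbb{Z}^4$.

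The second step is to count each class. In Case~A the butterfly is determined by $\pi_1$ together with the shared outer coordinate $u$ satisfying $\ell_u(\pi_1)\ge 2$; the remaining validity conditions on $\sigma_1$, $\sigma_2$, and $\pi_2$ follow automatically from $(\dagger)$ and $(\ddagger)$ once this single inequality is imposed. For each $u\in\{1,4\}$ a standard stars-and-bars count gives $\binom{n}{3}$ such $\pi_1$, and each Case~A butterfly is counted exactly twice in the sum over $u$ (once as $(\pi_1,u)$ and once as $(\pi_2,u')$), yielding $\binom{n}{3}$ Case~A butterflies. In Case~B the additional requirement $\ell_{v''}(\pi_1)\ge 2$ (needed so that $\pi_2$ is a valid length vector) together with $\ell_1(\pi_1),\ell_4(\pi_1)\ge 2$ forces three of the four length coordinates to be at least 2, yielding $\binom{n-2}{3}$ choices of $\pi_1$ for each $v\in\{2,3\}$; the same double-counting argument delivers $\binom{n-2}{3}$ Case~B butterflies. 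Summing produces the claimed total $\binom{n}{3}+\binom{n-2}{3}$.

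For the multiplicity claim, I would fix an edge $\pi\vtl\sigma$ with move $(u,v)$. This edge lies in exactly one Case~A butterfly, since the partner move from $\pi$ is forced to be $(u,v'')$ with $v''\neq v$ and its validity is guaranteed by $\ell_u(\pi)\ge 2$. It lies in a Case~B butterfly precisely when both $\ell_{u'}(\pi)\ge 2$ (so the partner move $(u',v)$ exists) and $\ell_{v''}(\pi)\ge 2$ (so $\pi_2=\pi+e_v-e_{v''}$ is a valid length vector). Since the move $(u,v)$ leaves $\ell_{u'}$ and $\ell_{v''}$ unchanged, the indices $i\in[4]$ with $\ell_i(\pi)=\ell_i(\sigma)$ are exactly $\{u',v''\}$; thus the edge lies in exactly one butterfly if and only if $\ell_{u'}(\pi)=1$ or $\ell_{v''}(\pi)=1$, equivalently there exists $i\in[4]$ with $\ell_i(\pi)=\ell_i(\sigma)=1$.
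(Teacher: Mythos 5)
Your proof is correct, and it takes a genuinely different route from the paper's. The paper fixes a bottom element $\pi_1$ and classifies by which coordinates of $\bl(\pi_1)$ equal $1$, tabulating every admissible $\pi_2$ (with its $\sigma_1,\sigma_2$) in each of four cases, counting the length vectors of each shape, and dividing by two; the count then closes via the identity $\frac{1}{2}\bigl[2\binom{n-1}{2}+2(n-3)+6\binom{n-3}{2}+4\binom{n-3}{3}\bigr]=\binom{n}{3}+\binom{n-2}{3}$. You instead classify butterflies by the unordered pair of up-moves at a bottom vertex, which splits them into exactly two structural types (shared outer coordinate versus shared inner coordinate, the diagonal pairs being impossible), and each type is counted by a single stars-and-bars computation landing directly on $\binom{n}{3}$ and $\binom{n-2}{3}$. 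What your organization buys is twofold: the two summands of the answer acquire a structural meaning, and the ``furthermore'' assertions about edge multiplicities fall out immediately from the type analysis --- the paper's written proof establishes only the count and leaves the edge-multiplicity claims to be read off its table. What the paper's version buys is an explicit inventory of all local butterfly configurations, which is useful for visualizing the Hasse diagram. One small imprecision in your write-up: when ruling out a diagonal pair $\{\sigma_1,\sigma_2\}$, one of the four reversals of $\sigma_1$ returns $\pi_1$ itself, and for that one $\bl(\sigma_2)-\bl(\pi_2)$ is a legitimate move vector (two nonzero coordinates), not a vector with three nonzero coordinates; that candidate is excluded because a butterfly requires $\pi_2\neq\pi_1$, not by the coordinate count. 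This does not affect any conclusion.
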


\begin{proof}

Start with $\pi_1$ with length vector $(\ell_1,\ell_2,\ell_3,\ell_4)$. The $\sigma_j$ must be obtained by lowering one of $\ell_1$ or $\ell_4$ and raising one of $\ell_2$ or $\ell_3$.  Since $\pi_1 \vtl \sigma_1$, we know that at least one of $\ell_1$ and $\ell_4$ is larger than $1$.  If exactly one of $\ell_1$ or $\ell_4$ is equal to $1$, then without loss of generality the length vector is $(1,\ell_2,\ell_3,\ell_4)$ and note that there are exactly two elements of $\bG_n$ covering $\pi_1$.  There is one possible $\pi_2$, namely that with length vector $(2,\ell_2,\ell_3,\ell_4-1)$.  If $\ell_1$ and $\ell_4$ are both larger than $1$ and $\ell_2=\ell_3=1$, then there are two possible $\pi_2$.  If $\ell_1$ and $\ell_4$ are both larger than $1$ and exactly one of $\ell_2$ and $\ell_3$ is equal to $1$, then there are three possible $\pi_2$.  If all coordinates are larger than $1$, then there are four possible $\pi_2$.  See Table \ref{fig-butterfly} for the possible $\pi_2$ as well as the elements covering both $\pi_1$ and $\pi_2$.

\begin{table}[ht]
\begin{center}
\begin{tabular}{c|c|c}
$\pi_1$ & $\pi_2$ & $\sigma_1$, $\sigma_2$ \\
\hline
$(1,\ell_2,\ell_3,\ell_4)$ & $(2,\ell_2,\ell_3,\ell_4-1)$ & $(1,\ell_2+1,\ell_3,\ell_4-1)$, $(1,\ell_2,\ell_3+1,\ell_4-1)$\\
\hline
$(\ell_1,1,1,\ell_4)$ & $(\ell_1-1,1,1,\ell_4+1)$ & $(\ell_1-1,2,1,\ell_4)$, $(\ell_1-1,1,2,\ell_4)$\\
& $(\ell_1+1,1,1,\ell_4-1)$ & $(\ell_1,2,1,\ell_4-1)$, $(\ell_1,1,2,\ell_4-1)$\\
\hline
$(\ell_1,1,\ell_3,\ell_4)$ & $(\ell_1-1,1,\ell_3,\ell_4+1)$ & $(\ell_1-1,2,\ell_3,\ell_4)$, $(\ell_1-1,1,\ell_3+1,\ell_4)$ \\
& $(\ell_1+1,1,\ell_3,\ell_4-1)$  & $(\ell_1,2,\ell_3,\ell_4-1)$, $(\ell_1,1,\ell_3+1,\ell_4-1)$\\
& $(\ell_1,2,\ell_3-1,\ell_4)$ & $(\ell_1-1,2,\ell_3,\ell_4)$, $(\ell_1,2,\ell_3,\ell_4-1)$\\
\hline
$(\ell_1,\ell_2,\ell_3,\ell_4)$ & $(\ell_1-1,\ell_2,\ell_3,\ell_4+1)$ & $(\ell_1-1,\ell_2+1,\ell_3,\ell_4)$, $(\ell_1-1, \ell_2,\ell_3+1,\ell_4)$\\
& $(\ell_1+1,\ell_2,\ell_3,\ell_4-1)$ & ($\ell_1,\ell_2+1,\ell_3,\ell_4-1)$, $(\ell_1,\ell_2,\ell_3+1,\ell_4-1)$\\
& $(\ell_1,\ell_2+1,\ell_3-1,\ell_4)$ & $(\ell_1-1,\ell_2+1,\ell_3,\ell_4)$, ($\ell_1,\ell_2+1,\ell_3,\ell_4-1)$ \\
& $(\ell_1,\ell_2-1,\ell_3+1,\ell_4)$ & $(\ell_1-1,\ell_2,\ell_3+1,\ell_4)$, $(\ell_1,\ell_2,\ell_3+1,\ell_4-1)$\\
\end{tabular}
\end{center}
\caption{The possible $\pi_2$ for a fixed $\pi_1$, and the corresponding $\sigma_1$ and $\sigma_2$.  The unspecified values $\ell_i$ are all larger than $1$.} 
\label{fig-butterfly}
\end{table}

We now need to count the number of permutations satisfying each of these cases. There are $\binom{n-1}{2}$ vectors with only $\ell_1=1$ and the same number with only $\ell_4=1$.  There are $n-3$ with only $\ell_2$ and $\ell_3$ being $1$. There are $\binom{n-3}{2}$ with only $\ell_2=1$ and $\binom{n-3}{2}$ with only $\ell_3=1$.  There are then $\binom{n-3}{3}$ with all entries larger than $1$. Dividing by two for the overcount gives 
\[ \frac{1}{2} \l[2\binom{n-1}{2} + 2\cdot(n-3) + 3\cdot 2\binom{n-3}{2} + 4\cdot \binom{n-3}{3} \r] = \binom{n}{3} + \binom{n-2}{3}\]
total butterflies in $(\bG_n,\le)$.
\end{proof}

\noindent Theorem \ref{thm-butterfly} says that \emph{every} Hasse arc in $(\bG_n,\le)$ participates in a lattice-obstruction. 
Since $(\bG_n,\le)$ is not a lattice, not every collection of elements will have an infimum (supremum, respectively). This leads to the following definition.

\begin{definition}
Let $\pi_1,\ldots,\pi_r \in \bG_n$.  We call $\sigma \in \bG_n$ a \emph{maximal element below $\pi_1,\ldots,\pi_r$} if $\sigma \leq \pi_i$ for each $i \in [r]$, and if $\tau \in \bG_n$ satisfies $\tau \leq \pi_i$ for $i \in [r]$ and $\tau \geq \sigma$ then $\tau = \sigma$.
\end{definition}

\noindent Note that there may be zero, one, or more than one such $\sigma$; if there are multiple $\sigma$ then those elements must be incomparable.

\begin{theorem}\label{thm-inf}
Let $\pi_1,\ldots,\pi_r \in \bG_n$. An element $\sigma \in \bG_n$ is a maximal element below $\pi_1,\ldots,\pi_r$ if and only if $\ell_1(\sigma) \geq \max_{1\leq j \leq r} \ell_1(\pi_j)$, $\ell_2(\sigma) \leq \min_{1\leq j \leq r} \ell_2(\pi_j)$, $\ell_3(\sigma) \leq \min_{1\leq j \leq r} \ell_3(\pi_j)$,  $\ell_4(\sigma) \geq \max_{1 \leq j \leq r} \ell_4(\pi_j)$, with equality in $\ell_2$ and $\ell_3$ or with equality in $\ell_1$ and $\ell_4$.
\end{theorem}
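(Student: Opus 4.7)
The plan is to reformulate everything in terms of length vectors via Theorem \ref{thm-BGcompCrit}. Write $L_1 := \max_{1\le j\le r} \ell_1(\pi_j)$, $L_4 := \max_{1\le j\le r} \ell_4(\pi_j)$, $L_2 := \min_{1\le j\le r} \ell_2(\pi_j)$, and $L_3 := \min_{1\le j\le r} \ell_3(\pi_j)$. Applying Theorem \ref{thm-BGcompCrit} to each of the $r$ relations $\tau \le \pi_j$ and intersecting bundles them into the four bounds $\ell_1(\tau) \ge L_1$, $\ell_4(\tau) \ge L_4$, $\ell_2(\tau) \le L_2$, $\ell_3(\tau) \le L_3$. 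So the theorem is now a purely combinatorial claim about length vectors of positive integers summing to $n+2$, ordered componentwise (downward in positions $1, 4$; upward in positions $2, 3$).

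For the $(\Leftarrow)$ direction, the stated hypotheses make $\sigma$ lie below every $\pi_j$ by the bundled bounds. For maximality, suppose $\sigma \le \tau \le \pi_j$ for every $j$. The sandwich $L_2 = \ell_2(\sigma) \le \ell_2(\tau) \le L_2$ (the left inequality from $\sigma \le \tau$ via Theorem \ref{thm-BGcompCrit}, the right from the bundled bounds applied to $\tau$) forces $\ell_2(\tau) = \ell_2(\sigma)$, and similarly $\ell_3(\tau) = \ell_3(\sigma)$. The identity $(\dagger)$ applied to both $\sigma$ and $\tau$ then yields $\ell_1(\tau) + \ell_4(\tau) = \ell_1(\sigma) + \ell_4(\sigma)$; combining with $\ell_1(\tau) \le \ell_1(\sigma)$ and $\ell_4(\tau) \le \ell_4(\sigma)$ (from $\sigma \le \tau$) gives equality throughout, so $\tau = \sigma$.

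For the $(\Rightarrow)$ direction, suppose $\sigma$ is maximal below $\pi_1, \ldots, \pi_r$; then $\ell_1(\sigma) \ge L_1$ and $\ell_4(\sigma) \ge L_4$ are immediate from ``below.'' To prove $\ell_2(\sigma) = L_2$ (and symmetrically $\ell_3(\sigma) = L_3$) I would argue contrapositively: assuming $\ell_2(\sigma) < L_2$, construct $\tau \in \bG_n$ with $\sigma < \tau \le \pi_j$ for every $j$ by promoting $\ell_2$ by one and demoting either $\ell_1$ or $\ell_4$ by one. This $\tau$ satisfies the length-vector conditions $(\dagger)$ and $(\ddagger)$ and, by Theorem \ref{thm-BGcompCrit}, is strictly above $\sigma$ while still beneath each $\pi_j$, contradicting maximality. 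The main obstacle is producing the necessary slack---showing that $\ell_2(\sigma) < L_2$ forces at least one of $\ell_1(\sigma) > L_1$ or $\ell_4(\sigma) > L_4$, so that the demotion is legal. The natural attack is to pick a witness $\pi_{j^*}$ achieving $\ell_2(\pi_{j^*}) = L_2$, apply $(\dagger)$ to $\sigma$ and $\pi_{j^*}$, and use the other three componentwise inequalities from $\sigma \le \pi_{j^*}$ to force a strict inequality in position $1$ or $4$; the delicate step is to upgrade ``strict slack against the witness'' to ``strict slack against the global maximum'' $L_1$ or $L_4$.
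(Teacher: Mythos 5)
Your reduction to length vectors and your $(\Leftarrow)$ argument are correct: the squeeze $L_2=\ell_2(\sigma)\le\ell_2(\tau)\le L_2$ (and its analogue for $\ell_3$), followed by $(\dagger)$ and the inequalities $\ell_1(\tau)\le\ell_1(\sigma)$, $\ell_4(\tau)\le\ell_4(\sigma)$, forces $\tau=\sigma$; this is essentially the paper's argument for that direction. The genuine gap is in $(\Rightarrow)$, precisely at the ``delicate step'' you flagged and left open: upgrading strict slack against a witness $\pi_{j^*}$ to strict slack against the global maxima $L_1,L_4$. That step cannot be carried out, because the implication is false. Take $n=6$, $r=2$, $\pi_1=125634$ with $\bl(\pi_1)=(3,2,2,1)$ and $\pi_2=341256$ with $\bl(\pi_2)=(1,2,2,3)$, so $L_1=L_4=3$ and $L_2=L_3=2$. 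Any $\tau$ below both must have $\ell_1(\tau)\ge 3$, $\ell_4(\tau)\ge 3$, all coordinates at least $1$, and coordinate sum $8$; the only possibility is $\bl(\tau)=(3,1,1,3)$, i.e.\ $\tau=124356$. Hence $\sigma=124356$ is the unique, and therefore maximal, element below $\pi_1,\pi_2$, yet $\ell_2(\sigma)=1<2=L_2$. (No $\sigma$ satisfies the four stated conditions here, as they would force coordinate sum at least $3+2+2+3=10>n+2$.)

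You should know that the paper's own proof founders on exactly this point: from $\ell_1(\sigma)>1$ and $\ell_2(\sigma)<L_2$ it passes to $\tau$ with length vector $(\ell_1(\sigma)-1,\ell_2(\sigma)+1,\ell_3(\sigma),\ell_4(\sigma))$ and asserts $\tau\le\pi_j$ for all $j$, which requires $\ell_1(\sigma)-1\ge L_1$; but only $\ell_1(\sigma)\ge L_1$ is available (in the example above $\ell_1(\sigma)=3=L_1$). What is true is the following: for $\sigma$ below all the $\pi_j$, set $s_{14}=(\ell_1(\sigma)-L_1)+(\ell_4(\sigma)-L_4)$ and $s_{23}=(L_2-\ell_2(\sigma))+(L_3-\ell_3(\sigma))$; then $\sigma$ fails to be maximal if and only if both slacks are positive, since a strictly larger $\tau$ below all $\pi_j$ is obtained exactly by moving mass from coordinates $1,4$ to coordinates $2,3$ within the box determined by $\sigma$ and the $L_i$. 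By $(\dagger)$, $s_{14}-s_{23}=n+2-(L_1+L_2+L_3+L_4)$ is independent of $\sigma$, so the theorem as stated is correct precisely under the additional hypothesis $L_1+L_2+L_3+L_4\le n+2$ (equivalently, when some element satisfying the four conditions exists): in that case $s_{23}\ge 1$ implies $s_{14}\ge 1$, and your promotion/demotion of a coordinate with genuine slack completes the contrapositive. If you restrict to that case, your outline becomes a full proof; without it, no proof exists.
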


\begin{proof}
Fix $\pi_1,\ldots,\pi_r \in \bG_n$.  If $\sigma$ is a maximal element below $\pi_1,\ldots,\pi_r$, we will show that the conditions on $\ell_i(\sigma)$ for $i \in [4]$ must hold. By Theorem \ref{thm-BGcompCrit}, it is clear that 
\begin{eqnarray*}
&\ell_1(\sigma) \geq \max_{1\leq j \leq r} \ell_1(\pi_j), \ell_2(\sigma) \le \min_{1\leq j \leq r} \ell_2(\pi_j), \\
&\ell_3(\sigma) \le \min_{1\leq j \leq r} \ell_3(\pi_j), \textrm{ and } \ell_4(\sigma) \geq \max_{1 \leq j \leq r} \ell_4(\pi_j).
\end{eqnarray*}
Furthermore, the equality conditions must be met, since if (without loss of generality) $\ell_1(\sigma) > \max_{1\leq j \leq r} \ell_1(\pi_j)$ and $\ell_2(\sigma) < \max_{1 \leq j \leq r} \ell_2(\pi_j)$, then $\tau \in \bG_n$ with length vector $(\ell_1(\sigma)-1,\ell_2(\sigma)+1,\ell_3(\sigma),\ell_4(\sigma))$ satisfies $\sigma < \tau$ and $\tau \leq \pi_j$ for $j \in [r]$.

For the converse, suppose that the conditions on $\ell_i(\sigma)$ for $i \in [4]$ hold as well as one of the equality conditions, Then $\sigma \leq \pi_j$ for each $j \in [r]$ by Theorem \ref{thm-BGcompCrit}. 
Suppose that $\tau \in \bG_n$ is such that $\tau \leq \pi_j$ for $j \in [r]$ and $\sigma \leq \tau$. Then by Theorem \ref{thm-BGcompCrit} we have $\ell_1(\tau) \geq \ell_1(\pi_j)$, $\ell_2(\tau) \leq \ell_2(\pi_j)$, $\ell_3(\tau) \leq \ell_3(\pi_j)$, and $\ell_4(\tau) \geq \ell_4(\pi_j)$ for all $j \in [r]$.  
But $\sigma \leq \tau$ implies that the equality conditions for $\sigma$ are also equality for $\tau$, and this forces $\sigma$ and $\tau$ to have the same rank. Since they have the same rank and are comparable, they must be equal.
\end{proof}

A way to view Theorem \ref{thm-inf} is the following.  Let $\pi_1,\ldots,\pi_r\in\bG_n$, and calculate the value
\[
\max_{1 \leq j \leq r} \ell_1(\pi_j) + \min_{1\leq j \leq r} \ell_2(\pi_j) + \min_{1\leq j \leq r} \ell_3(\pi_j) +  \max_{1 \leq j \leq r} \ell_4(\pi_j).
\]
If this expression is equal to $n+2$, then we define the unique $\sigma$ by defining $\ell_i(\sigma)$ to be the corresponding minimum or maximum value.  If this expression is smaller than $n+2$,  add to the outer coordinates until the sum equals $n+2$; all possible ways of adding to the outer coordinates gives the possible maximal elements below $\pi_1,\ldots,\pi_r$. In this case, there is equality in $\ell_2$ and $\ell_3$.  If this expression is greater than $n+2$, remove from the values in the inner coordinates until the sum reaches $n+2$; note that this might not be possible since the inner coordinates must be positive. 
On the other hand, there may be multiple ways to do this as well, all of which deliver a maximal element below $\pi_1,\ldots,\pi_r$. 
When this can be done, there is equality in $\ell_1$ and $\ell_4$.

All elements that are maximal elements below $\pi_1,\ldots,\pi_r$ have the same rank. Under the involution $f_{2143}$ on $\bG_n$, an analogous result holds for the similarly defined \emph{minimal element above $\pi_1,\ldots,\pi_r$}. We leave the details of this extension to the interested reader.

We next consider the M\"obius function on intervals of $\bG_n$. For $\pi,\sigma \in \bG_n$ we denote the interval $[\pi,\sigma] := \{\tau : \pi \leq \tau \leq \sigma\}$ and then we can recursively define the M\"obius function on intervals by $\mu(\pi,\pi)=1$ and $\mu(\pi,\sigma) = -\sum_{\pi \leq \tau < \sigma} \mu(\pi,\tau)$ (see, e.g., \cite{Stanley1}). Here an empty sum is $0$, so if $\pi \nleq \sigma$ then $\mu(\pi,\sigma)=0$. The M\"obius function has been studied on certain subposets of the permutation poset; see, e.g., \cite{Be1,Be2,Bj,W}.

For $\bG_n$, we are able to determine the M\"obius function on every interval; we will show that it takes values in the set $\{-2,-1,0,1,3,4\}$.  We make a few definitions and remarks before stating and proving the theorem. Suppose that $\pi, \sigma \in \bG_n$ have respective length vectors $(\ell_1,\ell_2,\ell_3,\ell_4)$ and $(m_1,m_2,m_3,m_4)$, and $\pi \leq \sigma$.  We let $d_i = |\ell_i-m_i|+1$ for $i \in [4]$. 

We first note that if $\pi_1 \leq \sigma_1$ and $\pi_2 \leq \sigma_2$ have the same values of $d_i$ for $i \in [4]$, then the intervals $[\pi_1,\sigma_1]$ and $[\pi_2,\sigma_2]$ are isomorphic, as the length vectors that lie in each interval are linear translates of each other. So it suffices to assume that $\pi$ has length vector $(x_1,1,1,x_4)$ and $\sigma$ has length vector $(1,x_2,x_3,1)$ (that is, they are minimal and maximal elements in $\bG_k$ for $k=x_1+x_4=x_2+x_3$). In this case we have $d_i = x_i$ for each $i \in [4]$.  The M\"obius function will be defined entirely in terms of the values $d_i$.

Second, we remark that for a $\tau$ with $\pi \leq \tau \leq \sigma$, there is a bijection between the \emph{distance vector from $\pi$ for $\tau$} $(d_1,d_2,d_3,d_4)$ and the length vector $(y_1,y_2,y_3,y_4)$ for $\tau$.  Also notice that for any distance vector from $\pi$ for $\tau$ we have $d_i \geq 1$ for $i \in [4]$,  $d_1+d_4=d_2+d_3$, and $d_1+d_4-2$ is the difference in the ranks of $\pi$ and $\tau$.

\begin{example}\label{ex-Mobius}
For $\pi_1$ with length vector $(6,3,4,9)$ and $\sigma_1$ with length vector $(2,8,7,5)$, the distance vector from $\pi_1$ for $\sigma_1$ is given by $(d_1,d_2,d_3,d_4)=(5,6,4,5)$. This is the same distance vector from $\pi_2$ with length vector $(5,1,1,5)$ for $\sigma_2$ with length vector $(1,6,4,1)$.  The interval $[\pi_1,\sigma_1]$ is isomorphic to the interval $[\pi_2,\sigma_2]$.
\end{example}

We are now ready to compute the M\"obius function for any interval in $\bG_n$.

\begin{theorem}\label{thm-Mobius}
Let $\pi, \sigma \in \bG_n$ have respective length vectors $(\ell_1,\ell_2,\ell_3,\ell_4)$ and $(m_1,m_2,m_3,m_4)$, and $\pi \leq \sigma$.  Let $d_i = |\ell_i-m_i|+1$ for $i \in [4]$. Then we have the following:
\begin{enumerate}
    \item\label{case1} If $d_1=d_2=d_3=d_4=1$, then $\mu(\pi,\sigma)=1$;
    \item\label{case2} If $d_1+d_4=d_2+d_3=3$, then $\mu(\pi,\sigma)=-1$;
    \item\label{case3} If $d_1=d_2=d_3=d_4=2$, then $\mu(\pi,\sigma)=3$;
    \item\label{case4} If $d_1=d_2=d_3=d_4=a\geq 3$, then $\mu(\pi,\sigma)=4$;
    \item\label{case5} If $a \geq 2$ with $d_1=d_4=a$ and $d_2=a \pm 1$ and $d_3=a\mp 1$, or if $a \geq 2$ with $d_2=d_3=a$ and $d_1=a\pm 1$ and $d_4=a\mp 1$, then $\mu(\pi,\sigma)=1$;
    \item\label{case6} If $a \geq 2$ with exactly one of $d_1$ and $d_4$ equaling $a$, exactly one of $d_2$ and $d_3$ equaling $a$, and the other two values equaling $a+1$, then $\mu(\pi,\sigma)=-2$; and
    \item If none of the above holds, then $\mu(\pi,\sigma)=0$.
\end{enumerate}
\end{theorem}

In short, if the distance vector entries are not ``almost equal,'' then the M\"obius function on that interval will be zero, and if they are ``almost equal'' then the M\"obius function on that interval will be non-zero. Note that the first several cases generally cover small values of $n$ (when viewing the interval as between a minimal and maximal element in $\bG_n$ for some $n$), and the last several cases cover larger values of $n$. 
To illustrate Theorem \ref{thm-Mobius}, when using the elements of $\bG_n$ given in Example \ref{ex-Mobius} we have $\mu(\pi_1,\sigma_1)=\mu(\pi_2,\sigma_2) = 1$ by part \ref{case5}. Furthermore, we remark that if $n \geq 6$ then all possible cases of the distance vectors are possible for some interval in $\bG_n$, and so all values of $\{-2,-1,0,1,3,4\}$ appear  as a value of $\mu(\pi,\sigma)$ for some choice of $\pi,\sigma \in \bG_n$, and intervals that are ``long'' have value $-2$, $1$, $4$, or $0$.

To prove the theorem, we will consider a fixed element $\sigma'$ which is covered by $\sigma$.  Many $\tau$ satisfy $\pi \leq \tau \leq \sigma'$, and by definition of the M\"obius function we have that $\sum_{\pi \leq \tau \leq \sigma'} \mu(\pi,\tau) = 0$.  We then separately consider the value of $\mu(\pi,\tau)$ where $\tau \nleq \sigma'$.  These latter $\tau$ have strong restrictions on the coordinates of their distance vectors, and so most of their values are inductively equal to 0. Looking again at $\pi_2$ and $\sigma_2$ in Example \ref{ex-Mobius}, the proof will use $\sigma_2'$ with length vector $(2,5,4,1)$; all elements in the interval $[\pi_2,\sigma_2]$ that are not below $\sigma_2'$ must have a first coordinate of $1$ or a second coordinate of $6$, and so will have the largest possible first or second coordinate in their distance vector. 

Example \ref{ex-Mobiusproof}, which is given immediately after the proof, illustrates the calculations in the proof and may be worth reading in parallel with the proof.

\begin{proof}[Proof of Theorem \ref{thm-Mobius}]
Recall that we may assume $\pi$ has length vector $(x_1,1,1,x_4)$ and $\sigma$ has length vector $(1,x_2,x_3,1)$, and so the distance vector from $\pi$ for $\sigma$ is $(d_1,d_2,d_3,d_4)=(x_1,x_2,x_3,x_4)$.

To prove the theorem, we induct on $d_1+d_4$, which is two more than the difference in the ranks of $\pi$ and $\sigma$. When $d_1+d_4=2$, we have $\sigma=\pi$, and the result is clear. When $d_1+d_4 = 3$ and $d_1+d_4=4$, the result follows by inspection on the minimal and maximal elements of $\bG_3$ and $\bG_4$ (see Figure \ref{BGPoset}). So we assume that $d_1+d_4>4$. We assume without loss of generality that $d_1 \geq d_4$ and $d_2 \geq d_3$ (the arguments are similar in the other cases, as the proof will simply use the largest value in the inner and outer coordinates); note that $d_1$ and $d_2$ must be at least $3$. 

Consider $\sigma'$ with length vector $(2,x_2-1,x_3,1)$. 
By induction (so that the M\"obius function is defined on $[\pi,\sigma']$) and the definition of the M\"obius function, we have 
\begin{equation}\label{eq-sigma'}
\sum_{\pi \leq \tau \leq \sigma'} \mu(\pi,\tau) = 0.
\end{equation}
To compute $\mu(\pi,\sigma)$, we also need to compute $\mu(\pi,\tau)$ where $\tau$ has $\pi \leq \tau < \sigma$ and $\tau \nleq \sigma'$, which means that $\tau$ has length vector $(y_1,y_2,y_3,y_4)$ satisfying $y_1=1$ or $y_2=x_2$.  In terms of the distance vector from $\pi$ for $\tau$, we have $d_1=x_1$ or $d_2=x_2$. In light of (\ref{eq-sigma'}), we call a $\tau$ that meets one of these two conditions one that \emph{contributes to the M\"obius function}. 
We next analyze these $\tau$ more closely based on the conditions required.

Suppose that $y_1=1$, i.e., that the first coordinate of the distance vector from $\sigma$ for $\tau$ satisfies $d_1=x_1$. Recall that by assumption we have $x_1 \geq x_4$. Since $\tau < \sigma$ we have the sum of the outer coordinates of the distance vector from $\pi$ for $\tau$ must be smaller than $x_1+x_4$, i.e., $d_1+d_4<x_1+x_4$.  Since $d_1=x_1$, we have $d_4<x_4\leq x_1$. In short, the value $d_4$ in the distance vector from $\pi$ for $\tau$ is smaller than $x_1$.  By induction the only non-zero values of $\mu(\pi,\tau)$ occur when $\tau$ has ``almost equal'' values for each $d_i$. As $d_1+d_4=d_2+d_3$, in this case we have non-zero values of $\mu(\pi,\tau)$ for $\tau$ with:
\begin{itemize}
\item $d_4=x_1 - 2$ (and $d_2=d_3=x_1-1$; here we use that $d_4<x_1$); or
\item $d_4=x_1 -1$ with exactly one of $d_2$ or $d_3$ having value $x_1\pm 1$ and the other having value $x_1$ (here again we use that $d_4<x_1$).
\end{itemize}

Suppose next that $y_2=x_2$, i.e., that that the second coordinate of the distance vector from $\pi$ for $\tau$ satisfies $d_2=x_2$. As before, $d_2+d_3<x_2+x_3$ combined with $d_2=x_2$ and $x_2 \geq x_3$ imply that $d_3<x_2$, where these $d_i$ values are part of the distance vector from $\pi$ for $\tau$.  Again, by induction the only non-zero values of $\mu(\pi,\tau)$ occur when $\tau$ has ``almost equal'' values for each $d_i$, and so we have non-zero values of $\mu(\pi,\tau)$ for $\tau$ with:
\begin{itemize}
\item $d_3=x_2 - 2$ (and $d_1=d_4=x_2-1$; here we use that $d_3<x_2$); or
\item $d_3=x_2-1$ with exactly one of $d_1$ or $d_4$ having value $x_2- 1$ and the other having value $x_2$ (here again we use that $d_3<x_2$). 
\end{itemize}

We now have the tools to compute the M\"obius function values based on the values in $(x_1,x_2,x_3,x_4)$ with $x_1\geq x_4$ and $x_2 \geq x_3$ arising from the initial choice of $\pi$ and $\sigma$. Recall that we are assuming that $x_1+x_4>4$ (and so the sum of the outer coordinates of the distance vector from $\pi$ to $\sigma$ is greater than $4$, i.e. $d_1+d_4>4$) and $x_1,x_2 \geq 3$. We have several cases to consider.  

\textbf{Case:} Suppose that $x_1=x_2=x_3=x_4$. 
We need to analyze those $\tau$ which contribute to the M\"obius function, so first assume that we consider $\tau$ and its distance vector $(d_1,d_2,d_3,d_4)$ from $\pi$ under the restriction of $d_1=x_1$.   By the previous analysis, there is such a $\tau$ with $d_4=x_1-2$ and $d_2=d_3=x_1-1$, giving $\mu(\pi,\tau)=1$.  And there is a $\tau$ with $d_4=x_1-1$ and $d_2=x_1-1$ and $d_3=x_1$, and another with $d_4=x_1-1$ and $d_2=x_1$ and $d_3=x_1-1$.  Each of these two $\tau$ gives $\mu(\pi,\tau)=-2$.

Similarly, in the case of $\tau$ with the restriction of $x_2=d_2$, there is such a $\tau$ with $d_3=x_2-2$ and $d_1=d_4=x_2-1$, giving $\mu(\pi,\tau) = 1$. We also have a $\tau$ with $d_3=x_2-1$ and $d_1=x_2-1$ and $d_4=x_2$, and another with $d_3=x_2-1$ and $d_1=x_2$ and $d_4=x_2-1$; this last one was already counted in the previous case as $x_1=x_2$. The former $\tau$ satisfies $\mu(\pi,\tau)=-2$.

In summary, the distance vectors from $\pi$ for $\tau$ that contribute a non-zero value to the M\"obius function are given by  $(x_1,x_1-1,x_1-1,x_1-2)$, $(x_1,x_1-1,x_1,x_1-1)$, and $(x_1,x_1,x_1-1,x_1-1)$;  from the second paragraph we also have $(x_2-1,x_2,x_2-2,x_2-1)$ and $(x_2,x_2,x_2-1,x_2-1)$.  The values of $\mu(\pi,\tau)$ for these five are $1$, $-2$, $-2$, $1$, and $-2$ (respectively). Note that by induction all other $\tau$ that contribue to the M\"obius function satisfy $\mu(\pi,\tau)=0$. Intuitively, in $\tau$ one coordinate of the distance vector is fixed, and so there are not many distance vectors whose coordinates are ``almost equal'' and so non-zero.

Then
\begin{align*}
\mu(\pi,\sigma) &= -\sum_{\pi \leq \tau < \sigma} \mu(\pi,\tau) \\ 
&= -\sum_{\pi \leq \tau \leq \sigma'} \mu(\pi,\tau) + \left(- \sum_{\pi \leq \tau < \sigma, \tau \nleq \sigma'} \mu(\pi,\tau)\right) = 0 - (1-2-2+1-2) = 4.    
\end{align*}

\textbf{Case:} Suppose that $x_1=x_4$ and $x_2=x_1\pm 1$ and $x_3=x_1\mp 1$. By our assumption that $x_2 \geq x_3$, we have that $x_2=x_1+1$ and $x_3=x_1-1$. Then the distance vectors from $\pi$ for $\tau$ that contribute a non-zero value to the M\"obius function are given by $(x_1,x_1,x_1-1,x_1-1)$ and $(x_1,x_1-1,x_1-1,x_1-2)$ with values of $\mu(\pi,\tau)$ given by $-2$ and $1$ (respectively). Again, all other $\tau$ that contribute to the M\"obius function satisfy $\mu(\pi,\tau)=0$ by induction.

Then
\begin{align*}
\mu(\pi,\sigma) &= -\sum_{\pi \leq \tau < \sigma} \mu(\pi,\tau) \\
&= -\sum_{\pi \leq \tau \leq \sigma'} \mu(\pi,\tau) + \left(- \sum_{\pi \leq \tau < \sigma, \tau \nleq \sigma'} \mu(\pi,\tau)\right) = 0 - (-2+1) = 1.    
\end{align*}

\textbf{Case:} Suppose that $x_2=x_3$ and $x_1=x_2\pm 1$ and $x_4=x_2\mp 1$. By our assumption that $x_1 \geq x_4$, we have that $x_1 = x_2+1$ and $x_4=x_2-1$. Then the distance vectors from $\pi$ for $\tau$ that contribute a non-zero value to the M\"obius function are given by $(x_2,x_2,x_2-1,x_2-1)$ and $(x_2-1,x_2,x_2-2,x_2-1)$ with values of $\mu(\pi,\tau)$ given by $-2$ and $1$ (respectively). Again, all other $\tau$ that contribute to the M\"obius function satisfy $\mu(\pi,\tau)=0$ by induction.

Then
\begin{align*}
\mu(\pi,\sigma) &= -\sum_{\pi \leq \tau < \sigma} \mu(\pi,\tau) \\
&= -\sum_{\pi \leq \tau \leq \sigma'} \mu(\pi,\tau) + \left(- \sum_{\pi \leq \tau < \sigma, \tau \nleq \sigma'} \mu(\pi,\tau)\right) = 0 - (-2+1) = 1.    
\end{align*}

\textbf{Case:} Suppose $x_1$ and $x_4$ differ by one and $x_2$ and $x_3$ differ by one. By our assumptions that $x_1 \geq x_4$ and $x_2 \geq x_3$, we have $x_1=x_2$, $x_4=x_1-1$, and $x_3=x_1-1$. Then the distance vectors from $\pi$ for $\tau$ that contribute a non-zero value to the M\"obius function are given by $(x_1,x_1-1,x_1-1,x_1-2)$ and $(x_1-1,x_1,x_1-2,x_1-1)$ with values of $\mu(\pi,\tau)$ given by $1$ and $1$. Again, all other $\tau$ that contribute to the M\"obius function satisfy $\mu(\pi,\tau)=0$ by induction.

Then
\begin{align*}
\mu(\pi,\sigma) &= -\sum_{\pi \leq \tau < \sigma} \mu(\pi,\tau) \\
&= -\sum_{\pi \leq \tau \leq \sigma'} \mu(\pi,\tau) + \left(- \sum_{\pi \leq \tau < \sigma, \tau \nleq \sigma'} \mu(\pi,\tau)\right) = 0 - (1+1) = -2.    
\end{align*}

\textbf{Case:} Suppose none of the previous cases apply. We will show that for any $\tau$ that contributes to the M\"obius function we have $\mu(\pi,\tau)=0$. Consider the distance vector $(d_1,d_2,d_3,d_4)$ from $\pi$ for $\tau$.  We imagine moving from the distance vector $(x_1,x_2,x_3,x_4)$ from $\pi$ to $\sigma$ to obtain the distance vector $(d_1,d_2,d_3,d_4)$ from $\pi$ to $\tau$. 

We know that such a $\tau$ either has first or second coordinate as its maximum value.  Suppose that the first coordinate has its maximum value.  Then since $\tau < \sigma$, the last coordinate must drop by at least one (in moving from the last coordinate in the distance vector from $\pi$ for $\sigma$). If the outer coordinates in the distance vector from $\pi$ for $\tau$ differ by exactly one, then those two coordinates were initially equal for $\sigma$.  Furthermore, in order for $\mu(\pi,\tau)\neq 0$ the inner coordinates for $\tau$ must differ by one. This implies that either they were initially equal or differed by two in $\sigma$, which puts us in one of the previous cases.

If the outer coordinates in $\tau$ differ by two, then they initially differed by at most one in $\sigma$. If they initially differed by zero in $\sigma$, then in order for $\mu(\pi,\tau)\neq 0$ we must have the inner coordinates of $\tau$ equal.  Therefore the inner coordinates of $\sigma$ must be equal or differ by two, which puts us in one of the previous cases.  If the outer coordinates differed by exactly one in $\sigma$, then in order for $\mu(\sigma,\tau)\neq 0$ the inner two coordinates of $\tau$ must be the same, which implies that the inner two coordinates of $\sigma$ differed by one.  This again puts us in one of the previous cases.

Similar arguments (essentially interchanging the role of inner and outer coordinates) show that if the second coordinate is maximum, then the distance vectors for any $\tau$ that contributes to the M\"obius function also has $\mu(\pi,\tau)=0$.  This shows that if the distance vector from $\pi$ to $\sigma$ does not satisfy any of the previous cases, then $\mu(\pi,\sigma)=0$.

Lastly, if $x_1<x_4$ or $x_2<x_3$, we simply interchange the role of $x_1$ and $x_4$ or $x_2$ and $x_3$ in the proof.
\end{proof}

We end with an example that illustrates many of the cases in the proof using specific intervals.

\begin{example}\label{ex-Mobiusproof}
Between $\pi=(5,1,1,5)$ and $\sigma=(1,5,5,1)$ we have $(x_1,x_2,x_3,x_4)=(5,5,5,5)$ and we group those below $\sigma'=(2,4,5,1)$ together.  The distance vectors for $\tau$ which correspond to non-zero values that contribute to the M\"obius function are  $(5,5,4,4)$ [value $\mu(\pi,\tau)=-2$], $(4,5,4,5)$ [value $\mu(\pi,\tau)=-2$], $(4,5,3,4)$ [value $\mu(\pi,\tau)=1$], $(4,4,5,5)$ [value $\mu(\pi,\tau)=-2$], and $(3,4,4,5)$ [value $\mu(\pi,\tau)=1$].  

Between $\pi=(5,1,1,5)$ and $\sigma=(1,4,6,1)$  we have $(x_1,x_2,x_3,x_4)=(5,4,6,5)$ and we group those below $\sigma'=(2,4,5,1)$ together.  The distance vectors for $\tau$ which correspond to non-zero values that contribute to the M\"obius function are $(5,4,5,4)$ [value $\mu(\pi,\tau)=-2$] and = $(5,4,4,3)$ [value $\mu(\pi,\tau)=1$].

Between $\pi=(4,1,1,5)$ and $\sigma=(1,5,4,1)$ we have $(x_1,x_2,x_3,x_4)=(4,5,4,5)$ and we group those below $\sigma'=(1,4,4,2)$ together. The distance vectors for $\tau$ which correspond to non-zero values that contribute to the M\"obius function are $(4,5,3,4)$ [value $\mu(\pi,\tau)=1$] and $(3,4,4,5)$ [value $\mu(\pi,\tau)=1$].

Between $\pi=(4,1,1,5)$ and $\sigma=(1,6,3,1)$ we have $(x_1,x_2,x_3,x_4)=(4,6,3,5)$ and we group those below $\sigma'=(1,5,3,2)$ together. All distance vectors for $\tau$ with second coordinate $6$ or last coordinate $5$ never have a non-zero value for $\mu(\pi,\tau)$.
\end{example}

\section{Probabilistic Properties}\label{sec-probability}
In the sections that follow, many of our results involve asymptotic analysis. For the various notations related to asymptotics, including $o(f(n)),O(f(n)),\Omega(f(n)),\Theta(f(n)),$ and $f(n)\sim g(n)$, we refer the reader to \cite{FlajoletSedgewick}. As a consequence of these analytical results, many of our proofs will take on a technical flavor, but the reason for pushing through these computations is that we are able to give rather precise results in the $\bG_n$ poset, which is not very often the case in these probabilistic analyses (see, e.g., \cite{Hammett,HammettPittel,Pittel3,Pittel2,Pittel1}). It is also our hope that these results could serve as the foundation upon which a more thorough analysis of this poset can be built. As we mentioned in the introduction, it may happen that a deeper understanding of the $\bG_n$ poset at this combinatorial probabilistic level may elicit insight into the lattice of monotone triangles (see \cite{LascouxSchutz} and \cite[ex. 7.103]{Stanley2}). 

The results in this section discuss the distribution of $\alpha$ and $\beta$ when restricted to $\bG_n$. 
Hereafter we regard $\alpha = \alpha(\w)$ and $\beta=\beta(\w)$ as random variables, where $\w\in\bG_n$ is selected uniformly at random from among all $\bg_n=\binom{n+1}{3}$ BG permutations. By Corollary \ref{cor-BGcompCrit}, $\alpha$ and $\beta$ are equidistributed. 

\begin{theorem}\label{thm-bgE[beta]}
For $\w\in \bG_n$ selected uniformly at random, we have
\[
\E\l[ \alpha \r] = \E\l[ \beta \r] = \frac{(n+3)(n+2)(n+1)}{60}= \frac{\bg_{n+2}}{10}
\]
and
\[
\Var\l[ \alpha \r] = \Var\l[ \beta \r] = \frac{(n+3)(n+2)^2(n+1)^2(n-2)}{2400}= \frac{3(n-2)\E\l[ \alpha \r]^2}{2(n+3)}.
\]
\end{theorem}
We remark that Balcza \cite{Balcza} computes the expectation and variance of $\beta$ for a uniformly random element of $\frakS_n$, the set of \emph{all} permutations.  Theorem \ref{thm-bgE[beta]} is essentially a special case of a more general formula for the product moments $\E\l[\alpha^k\beta^\ell\r]$, and so we delay the proof momentarily.  In this direction, define $g_k(t) := \sum_{i \geq 0} i^k t^i$ for each $k \geq 0$ (where $0^0:=1$).  It is known \cite[\S 1.4]{Stanley1} that 
\begin{equation}\label{eq-g_k(t)}
g_k(t) = \frac{A_k(t)}{(1-t)^{k+1}}, \quad k \geq 0,
\end{equation}
where $A_k(t) := \sum_{j=1}^{k} A_{k,j}t^j$ is the $k$th Eulerian polynomial (and we define $A_0(t)=1$). 
Furthermore, the Eulerian polynomials have bivariate exponential generating function  
\begin{equation}\label{eq-EulerianGF}
A(t,u):=\sum_{i \geq 0} A_i(t) \frac{u^i}{i!} = \frac{1-t}{1-te^{(1-t)u}} = (1-t) \sum_{i\geq 0} t^i e^{i(1-t)u}.
\end{equation}

We note that by (\ref{eq-EulerianGF}), for each $k\ge 0$ we have 
\begin{align}
D_k(t,u)&:= \frac{\partial^k}{\partial u^k}A(t,u)=\sum_{i\ge 0}A_{k+i}(t)\frac{u^i}{i!}\label{eq-EulerianGFderivative1}\\
&=(1-t)^{k+1}\sum_{i\ge 0}i^k t^i e^{i(1-t)u}   \nonumber\\
&=(1-t)^{k+1}g_k\(te^{(1-t)u}\)\label{eq-EulerianGFderivative2}.
\end{align}
We are now ready to prove the following.

\begin{theorem-E[a^kb^l]}
For each $k\ge 0$, $\ell\ge 0$, and $n\ge 2$ we have
\[
\E\l[\alpha^k\beta^\ell\r]=\frac{k!\ell!}{2^{k+\ell}\bg_n}\l[t^{n+2}u^kv^\ell\r]\frac{\(g_k\(te^{(1-t)u}\)-{\bf 1}_{\{k=0\}}\)^2\(g_\ell\(te^{(1-t)v}\)-{\bf 1}_{\{\ell=0\}}\)^2}{(1-t)^{k+\ell}}.
\]
\end{theorem-E[a^kb^l]}

\begin{proof} By Corollary \ref{cor-BGcompCrit} and (\ref{eq-g_k(t)}), we have 
{\footnotesize
\begin{align}
 &\E\l[ \alpha^k\beta^\ell\r] 
\nonumber    = \frac{1}{2^{k+\ell}\bg_n} \sum_{\substack{ w+x+y+z=n+2\\ w,x,y,z \geq 1}} w^k x^k(w+x)^k y^\ell z^\ell(y+z)^\ell\\
\nonumber    &= \frac{1}{2^{k+\ell}\bg_n} \sum_{\substack{0\le i \le k \\ 0 \le j \le \ell}}\binom{k}{i}\binom{\ell}{j}
                    \sum_{2\le m \le n}\(\sum_{\substack{ w+x=m\\ w,x \geq 1}} w^{k+i} x^{k+(k-i)}
                                    \sum_{\substack{ y+z=n+2-m\\ y,z \geq 1}} y^{\ell+j} z^{\ell+(\ell-j)}\)\\
\nonumber    &= \frac{1}{2^{k+\ell}\bg_n} \sum_{\substack{0\le i \le k \\ 0 \le j \le \ell}}\binom{k}{i}\binom{\ell}{j}
                \sum_{2\le m \le n}\l[t^m\r]\(g_{k+i}(t)-{\bf 1}_{\{k=0\}}\)\(g_{k+(k-i)}(t)-{\bf 1}_{\{k=0\}}\)\\
\nonumber& \hspace{2in} \cdot \l[t^{n+2-m}\r]\(g_{\ell+j}(t)-{\bf 1}_{\{\ell=0\}}\)\(g_{\ell+(\ell-j)}(t)-{\bf 1}_{\{\ell=0\}}\)\\
\nonumber    &= \frac{k!\ell!}{2^{k+\ell}\bg_n} \l[t^{n+2}\r]\(\sum_{0\le i \le k}\frac{\(g_{k+i}(t)-{\bf 1}_{\{k=0\}}\)}{i!}\frac{\(g_{k+(k-i)}(t)-{\bf 1}_{\{k=0\}}\)}{(k-i)!} \)\\
\nonumber&\hspace{2in} \cdot                                                        \(\sum_{0\le j \le \ell}\frac{\(g_{\ell+j}(t)-{\bf 1}_{\{\ell=0\}}\)}{j!}\frac{\(g_{\ell+(\ell-j)}(t)-{\bf 1}_{\{\ell=0\}}\)}{(\ell-j)!} \)\\
\label{eq-E[a^kb^l]}     &= \frac{k!\ell!}{2^{k+\ell}\bg_n} \l[t^{n+2}\r]\frac{1}{(1-t)^{3(k+\ell)+4}}\sum_{0\le i \le k}\frac{\(A_{k+i}(t)-(1-t){\bf 1}_{\{k=0\}}\)}{i!}\frac{\(A_{k+(k-i)}(t)-(1-t){\bf 1}_{\{k=0\}}\)}{(k-i)!}\\
\nonumber&\hspace{2in} \cdot
    \sum_{0\le j \le \ell}\frac{\(A_{\ell+j}(t)-(1-t){\bf 1}_{\{\ell=0\}}\)}{j!}\frac{\(A_{\ell+(\ell-j)}(t)-(1-t){\bf 1}_{\{\ell=0\}}\)}{(\ell-j)!} \\
\nonumber    &= \frac{k!\ell!}{2^{k+\ell}\bg_n} [t^{n+2}]\Bigg(\frac{1}{(1-t)^{3(k+\ell)+4}}\l[u^k\r]\(D_k(t,u)-(1-t){\bf 1}_{\{k=0\}}\)^2\l[v^\ell\r]\(D_\ell(t,v)-(1-t){\bf 1}_{\{\ell=0\}}\)^2\Bigg);
\end{align}}
here, in the last line we have used identity (\ref{eq-EulerianGFderivative1}). The result now follows from (\ref{eq-EulerianGFderivative2}).
\end{proof}

Using Theorem \ref{thm-E[a^kb^l]} we compute the asymptotic value of the product moments $\E\l[\alpha^k\beta^\ell\r]$.

\begin{theorem}\label{thm-E[a^kb^l]sim}
For each $k\ge 0$, $\ell\ge 0$, and $n\to \infty$ we have
\[
\E\l[\alpha^k\beta^\ell\r]\sim\frac{n^{3(k+\ell)}}{2^{k+\ell-1}(k+\ell+1)\binom{3(k+\ell)+2}{3k+1}(2k+1)\binom{2k}{k}(2\ell+1)\binom{2\ell}{\ell}}.
\]
\end{theorem}

\begin{proof}
Note that the result is clear for $(k,\ell) = (0,0)$. First assume $k,\ell \neq 0$. We write
\begin{equation}\label{eq-EulerGFproduct}
\sum_{0\le a\le b}\frac{A_{b+a}(t)}{a!}\frac{A_{b+(b-a)}(t)}{(b-a)!}:=\sum_{0\le i\le 3b}c_{b,i}t^i;
\end{equation}
here, the right-hand sum is obtained by expanding the left-hand sum of products, and collecting terms. Substituting (\ref{eq-EulerGFproduct}) into (\ref{eq-E[a^kb^l]})  delivers  
\begin{align}
\nonumber \E\l[\alpha^k\beta^\ell\r]
&= \frac{k!\ell!}{2^{k+\ell}\bg_n} \l[t^{n+2}\r]\frac{1}{(1-t)^{3(k+\ell)+4}}\sum_{0\le a \le 3k}c_{k,a}t^a\sum_{0\le b \le 3\ell}c_{\ell,b}t^b\\
\label{eq-E[a^kb^l]no1} &= \frac{k!\ell!}{2^{k+\ell}\bg_n} \sum_{\substack{0\le a \le 3k\\ 0\le b \le 3\ell}}c_{k,a}c_{\ell,b}\binom{n+2-(a+b)+3(k+\ell)+3}{3(k+\ell)+3}.
\end{align}
Thus, from (\ref{eq-EulerGFproduct}), (\ref{eq-E[a^kb^l]no1}), and $\bg_n = \binom{n+1}{3}$ we obtain 
\begin{align}
&\E\l[ \alpha^k\beta^\ell\r] \sim \frac{3!k!\ell!n^{3(k+\ell)}}{(3(k+\ell)+3)!2^{k+\ell}}\sum_{\substack{0\le a \le 3k\\ 0\le b \le 3\ell}}c_{k,a}c_{\ell,b}\nonumber\\
&=\frac{3!k!\ell!n^{3(k+\ell)}}{(3(k+\ell)+3)!2^{k+\ell}}\sum_{0\le i \le k}\frac{A_{k+i}(1)}{i!}\frac{A_{k+(k-i)}(1)}{(k-i)!}
                                                                    \sum_{0\le j \le \ell}\frac{A_{\ell+j}(1)}{j!}\frac{A_{\ell+(\ell-j)}(1)}{(\ell-j)!}\nonumber\\
&=\frac{3!k!^3\ell!^3n^{3(k+\ell)}}{(3(k+\ell)+3)!2^{k+\ell}}\sum_{0\le i \le k}\binom{i+k}{i}\binom{(k-i)+k}{(k-i)}
                                                                    \sum_{0\le j \le \ell}\binom{j+\ell}{j}\binom{(\ell-j)+\ell}{(\ell-j)}\label{Euler}\\
&=\frac{3!k!^3\ell!^3n^{3(k+\ell)}}{(3(k+\ell)+3)!2^{k+\ell}}\l[t^k\r]\(\frac{1}{(1-t)^{k+1}}\)^2\l[t^\ell\r]\(\frac{1}{(1-t)^{\ell+1}}\)^2\nonumber\\
&=\frac{3!k!^3\ell!^3n^{3(k+\ell)}}{(3(k+\ell)+3)!2^{k+\ell}}\binom{k+(2k+2)-1}{k}\binom{\ell+(2\ell+2)-1}{\ell}.\nonumber
\end{align}
Here, in (\ref{Euler}) we have used 
$A_r(t)=\sum_{\sigma\in\frakS_r}t^{1+\textrm{des}(\sigma)}, $
where $$\textrm{des}(\sigma)=|\{i\in [r-1]\, : \, \sigma(i)>\sigma(i+1)\}|$$ is the number of \emph{descents} of $\sigma$ \cite[\S 1.4]{Stanley1}. Hence we have $A_r(1)=|\frakS_r|=r!$, and after a little algebra the result follows.

If $k=0$ (and $\ell> 0$), then from (\ref{eq-E[a^kb^l]}) we have
\begin{align}
\nonumber \E\l[\beta^\ell\r]   &= \frac{\ell!}{2^{\ell}\bg_n} [t^{n+2}] \frac{1}{(1-t)^{3\ell+4}} \cdot t^2 \cdot \sum_{0\leq j \leq \ell} \frac{A_{\ell+j}(t)}{j!}\frac{A_{\ell+(\ell-j)}(t)}{(\ell-j)!}\\
\nonumber &= \frac{\ell!}{2^{\ell}\bg_n} [t^{n+2}] \frac{1}{(1-t)^{3\ell+4}} \cdot t^2 \cdot \sum_{0 \leq b \leq 3\ell} c_{\ell,b} t^b\\
\nonumber &= \frac{\ell!}{2^{\ell}\bg_n}  \sum_{0 \leq b \leq 3\ell} c_{\ell,b} \binom{n+2 - (2+b) + 3\ell + 3}{3\ell+3},
\end{align}
and so similar to the first computation we have
\begin{align}
\E\l[ \beta^\ell\r] &\sim \frac{3!\ell!n^{3\ell}}{(3\ell+3)!2^{\ell}}\sum_{ 0\le b \le 3\ell}c_{\ell,b}\nonumber\\
 &=  \frac{3!\ell!^3n^{3\ell}}{(3\ell+3)!2^{\ell}}\binom{\ell+(2\ell+2)-1}{\ell}\nonumber.
\end{align}
Again, after a little algebra the result follows.  If $\ell=0$ (and $k> 0$), then a similar computation (involving $k$) shows that the result holds in this case.
\end{proof}

\noindent Recalling that $\alpha$ and $\beta$ are equidistributed, this shows that for example $\E\l[\alpha^3\r] \neq \E\l[\alpha^2\beta\r]$.  We also immediately obtain: 

\begin{corollary}\label{thm-E[a^k]sim}
For each $k\ge 0$ and $n\to\infty$ we have 
\[
\E\l[ \alpha^k\r]=\E\l[ \beta^k\r] \sim \frac{n^{3k}}{2^{k-1}(k+1)(3k+2)(2k+1)\binom{2k}{k}}.\epf
\]
\end{corollary}

\noindent We next perform a few calculations that will prove useful.

\begin{example}\label{ex-E[a^kb^l]smallish}
Note that $g_0(t)=(1-t)^{-1}$, and so 
\begin{equation}\label{eq-E[a^0b^0]}
\l[u^0\r]\(g_0\(te^{(1-t)u}\)-1\)^2=\left. \(\frac{te^{(1-t)u}}{1-te^{(1-t)u}}\)^2\right|_{u=0}=\frac{t^2}{(1-t)^2}.
\end{equation}
Next, $g_1(t)=t(1-t)^{-2}$ and thus 
\begin{align}\label{eq-E[a^1b^0]}
\l[u^1\r]&\(g_1\(te^{(1-t)u}\)\)^2 =\l[u^1\r]\frac{t^2e^{2(1-t)u}}{\(1-te^{(1-t)u}\)^4}
    =\frac{\partial}{\partial u}\left.\( \frac{t^2e^{2(1-t)u}}{\(1-te^{(1-t)u}\)^4} \)\right|_{u=0} 
    =\frac{2t^2(1+t)}{(1-t)^4}.    
\end{align}
Finally, $g_2(t)=t(1+t)(1-t)^{-3}$ and a similar calculation gives 
\begin{align}\label{eq-E[a^2b^0]}
\l[u^2\r]\(g_2\(te^{(1-t)u}\)\)^2 
    &=\frac{2 t^2 \(t^4+10 t^3 +20 t^2 +10 t +1\)}{(1-t)^6}.    
\end{align}
\end{example}

\noindent With these results in hand, we now prove Theorem \ref{thm-bgE[beta]}.

\begin{proof}[Proof of Theorem \ref{thm-bgE[beta]}.] 
By (\ref{eq-E[a^0b^0]}) and (\ref{eq-E[a^1b^0]}) 
\begin{align*}
\E\l[ \alpha^1\beta^0\r]&=\frac{1}{2\bg_n}\l[ t^{n+2}\r]\frac{1}{(1-t)}\(\l[u^1\r]\(g_1\(te^{(1-t)u}\)\)^2 \l[v^0 \r]\(g_0\(te^{(1-t)v}\)-1\)^2\)\\
    &=\frac{1}{2\bg_n}\l[ t^{n+2}\r]\frac{2t^4(1+t)}{(1-t)^7} = \frac{1}{\bg_n}\(\binom{n-2+7-1}{n-2}+\binom{n-3+7-1}{n-3}\)\\
    &=\frac{(n+3)(n+2)(n+1)}{60}.
\end{align*}
Similarly, by (\ref{eq-E[a^0b^0]}) and (\ref{eq-E[a^2b^0]}) 
\begin{align*}
\E\l[ \alpha^2\beta^0\r]&=\frac{2}{4\bg_n}\l[ t^{n+2}\r]\frac{1}{(1-t)^2}\(\l[u^2\r]\(g_2\(te^{(1-t)u}\)\)^2 \l[v^0 \r]\(g_0\(te^{(1-t)v}\)-1\)^2\)\\
    &=\frac{1}{2\bg_n}\l[ t^{n+2}\r]\frac{2t^4\(t^4+10 t^3 +20 t^2 +10 t +1\)}{(1-t)^{10}} \\
    &= \frac{1}{\bg_n}\Bigg( \binom{n-6+10-1}{n-6}+10\binom{n-5+10-1}{n-5}+20\binom{n-4+10-1}{n-4}\\
    & \qquad\qquad\qquad\qquad\qquad\qquad +10\binom{n-3+10-1}{n-3}+\binom{n-2+10-1}{n-2} \Bigg)\\
    &= \frac{(n+3)(n+2)^2(n+1)^2 n}{1440}.
\end{align*}
We finish the calculation by applying the variance formula $\Var\l[ \alpha \r] = \E \l[ \alpha^2 \r] - \E \l[ \alpha \r]^2$.
\end{proof}

\noindent We also have the following, which will be needed in Section \ref{sec-comparability}.

\begin{theorem}\label{thm-Eab}
We have 
\[
\E\l[\alpha\beta\r]=\frac{\bg_{n+1}\bg_{n+4}\(n^2+4 n+6\)}{420n(n+1)}.
\]
\end{theorem}

\begin{proof}
From (\ref{eq-E[a^1b^0]}) and Theorem \ref{thm-E[a^kb^l]} we have
\begin{align*}
\E\l[ \alpha^1\beta^1\r]&=\frac{1}{4\bg_n}\l[ t^{n+2}\r]\frac{1}{(1-t)^2}\(\l[u^1\r]\(g_1\(te^{(1-t)u}\)\)^2 \)^2
    =\frac{1}{4\bg_n}\l[ t^{n+2}\r]\frac{4t^4(1+t)^2}{(1-t)^{10}}\\
    &= \frac{1}{\bg_n}\(\binom{n-2+10-1}{n-2}+2\binom{n-3+10-1}{n-3}+\binom{n-4+10-1}{n-4}\)\\
    &=\frac{(n+5)(n+4)(n+3)(n+2)\(n^2+4 n+6\)}{15120},
\end{align*}
from which the result follows. 
\end{proof}

We next show that, for the uniformly random $\w\in\bG_n$, with high probability (whp) the $i$th length vector coordinate $\ell_i=\ell_i\(\w\)$ is of order $n$ for each $i\in [4]$. Indeed, recall that by Corollary \ref{cor-BGcompCrit} we have $\beta = \frac{1}{2}\ell_2\ell_3(\ell_2+\ell_3)$.  Furthermore, $\ell_2,\ell_3,\ell_2+\ell_3-1 \in [n-1]$ and $\ell_2$ has density
\begin{equation}\label{eqn-ell2density}
\pr(\ell_2=x) = \frac{1}{\bg_n} \sum_{i=0}^{n-1-x} (n-x-i) = \frac{1}{\bg_n} \binom{n+1-x}{2};
\end{equation}
here, on the event $\{\ell_2=x=b-a\}$ (with $a,b,c$ defined as in (\ref{eq-BGform})) there are a total of $(n-x)$ pairs $(a,b)$ with $0\le a<b<n$, and for each pair $(a,b)$ with $b-a=x$ and $a=i$ we have $(n-x-i)$ possible $c\in (x+i,n]=(b,n]$, from which the sum in (\ref{eqn-ell2density}) follows. Now we obtain the cumulative distribution function:
\begin{equation*}
\pr(\ell_2 \leq s) = \frac{1}{\bg_n} \sum_{x=1}^s \binom{n+1-x}{2} = \frac{1}{\bg_n} \left[ \binom{n+1}{3} - \binom{n+1-s}{3}\right] = 1-\frac{\bg_{n-s}}{\bg_n}.
\end{equation*}
Therefore
\begin{equation}\label{eqn-ell2cdf}
\pr(\ell_2 \leq s) = \frac{s^3-3s^2n+3sn^2-s}{n^3-n}=\frac{n^3-(n-s)^3-s}{n^3-n},\qquad 1\le s \le n-1,
\end{equation}
so it is clear that if $s=o(n)$ then $\pr(\ell_2\leq s)\to 0$, $n\to\infty$. Thus, by equidistribution of the $\ell_i$, it follows that the coordinates of the length vector are whp of order $n$. We summarize this in the following theorem.

\begin{theorem}\label{thm-betaOrder}
For the uniformly random $\w\in\bG_n$, whp we have $\alpha=\Theta(n^3)$ and $\beta=\Theta(n^3)$.
\end{theorem}

\begin{proof}
We need only observe that the $\ell_i$ are equidistributed, and thus by (\ref{eqn-ell2cdf}) each of these random length vector coordinates is of almost sure order $n$. A union bound now finishes the proof. 
\end{proof}

This last theorem highly suggests, but does not prove, that scaling $\beta$ ($\alpha$ respectively) by $n^3$ could lead us to its limiting distribution.
But as $\ell_2$ and $\ell_3$ are equidistributed, analogous formulas (\ref{eqn-ell2density}) and (\ref{eqn-ell2cdf}) hold for $\ell_3$. And for the joint density of $\ell_2$ and $\ell_3$, given $x,y\ge 1$ with $x+y\le n$ we have
\begin{equation}\label{eqn-betajoint}
\pr\(\ell_2=x,\ell_3=y\) = \frac{1}{\bg_n} \sum_{i=0}^{n-x-y} 1 = \frac{n+1-x-y}{\bg_n}=6\(1-\frac{x}{n+1}-\frac{y}{n+1}\)\frac{1}{n(n-1)}.
\end{equation}
This last expression is the generic term of a Riemann sum, and so we recall the following classical distribution. Let $3$ points be chosen independently and uniformly at random from the unit interval $(0,1)$, and let $X<Y<Z$ represent these points in their increasing order. Define $L_1=X$, $L_2=Y-X$, $L_3=Z-Y$ and $L_4=1-Z$ to be the random lengths of consecutive subintervals formed by these 3 random points. Then it is known (see, e.g., Feller \cite[ch. 1]{Feller}) that the random variables $L_1,\ldots,L_4$ are exchangeable and, for $1\le k \le 3$, the joint density of $(L_1,\ldots,L_k)$ is 
\[
f(x_1,\ldots,x_k):=\frac{3!}{(3-k)!}\(1-\sum_{j=1}^k x_j\)^{3-k}.
\]
Thus, as the $L_i$ are exchangeable it follows that $(L_2,L_3)$ has joint density $f(x_1,x_2)=6(1-x_1-x_2)$. This, combined with (\ref{eqn-betajoint}), gives the following theorem.

\begin{theorem-betaDist}
The random variable $\beta/n^3$ converges in distribution to $\frac{1}{2}L_2L_3(L_2+L_3)$. More precisely, for intervals $I=(ni_0,ni_1)\subseteq (0,n)$ and $J=(nj_0,nj_1)\subseteq (0,n)$ we have 
\[
\pr\(\ell_2\in I, \ell_3\in J\)
\to \int_{\substack{i_0<x_1<i_1 \\ j_0<x_2<j_1\\ 0< x_1+x_2 < 1}} 6(1-x_1-x_2) \,dA=\pr\l[L_2\in (i_0,i_1),L_3\in(j_0,j_1)\r],
\]
$n\to\infty$, with error term of order $n^{-1}$.
\end{theorem-betaDist}

\begin{proof}
From (\ref{eqn-betajoint}) we have
\begin{align*}
\pr\(\ell_2 \in I, \ell_3 \in J\)
&= \sum_{\substack{i_0<x/n<i_1 \\ j_0<y/n<j_1 \\ 0 < x/n+y/n < 1}} 6\(1-\frac{x}{n}-\frac{y}{n}\)\frac{1}{n^2}+O\(n^{-1}\)\\
&= \int_{\substack{i_0<x_1<i_1 \\ j_0<x_2<j_1\\ 0< x_1+x_2 < 1}} 6(1-x_1-x_2) \,dA + O\(n^{-1}\),
\end{align*}
from which the result follows.
\end{proof}

Thinking of $a$, $b$, and $c$ in (\ref{eq-BGform}) as random variables, we may similarly express probabilities in terms of coordinate ranges for the random vector $(a,b,c)$.

\begin{theorem}\label{thm-asymp beta pdf}
For $\w\in \bG_n$ chosen uniformly at random, we have 
\[
\pr\(na_0 < a < na_1, nb_0 < b < nb_1, nc_0 < c < nc_1\) = \int_{\substack{a_0< x < a_1\\ b_0 < y < b_1 \\ c_0 < z < c_1 \\ 0< x<y<z < 1}} 6 \,dV + O\(n^{-1}\).\epf
\]
\end{theorem}

\noindent In fact, Theorem \ref{thm-betaDist} follows from Theorem \ref{thm-asymp beta pdf} via $x_1=y-x$, $x_2=z-y$, $x_3=1-z$.

Theorem \ref{thm-asymp beta pdf} readily gives that $\beta$ is not concentrated about $\E\l[\beta\r]\sim n^3/60$. For example, 
an easy calculation gives that an asymptotic proportion of $1/1000$ elements of $\bG_n$ have $c \leq n/10$, and for each of these the value of $\beta$ is at most $(n/10)^3$. 

We also obtain the asymptotic value of the moments of $\beta$, expressed as an integral.  The proof is similar to the above. 

\begin{theorem}\label{thm-betaMomentsIntegral}
For each fixed integer $k \geq 1$, we have
\[
\E\l[ \(\frac{\beta}{n^3}\)^k \r] = \frac{6}{2^k} \int_{0 \leq x<y<z\leq 1} (y-x)^k(z-x)^k(z-y)^k \, dV + O\(\frac{k}{2^k n}\).\epf
\]
\end{theorem}

We end this section with a consequence of Corollary \ref{thm-E[a^k]sim} and Theorem \ref{thm-betaMomentsIntegral}. Multiplying the former asymptotic formula by $2^k/n^{3k}$, the latter equation by $2^k$, and letting $n\to\infty$ we obtain:

\begin{corollary}\label{cor-Selberg}
For each fixed integer $k \geq 1$, we have
\[
\int_0^1\int_0^1\int_0^1|y-x|^k|z-x|^k|z-y|^k\, dx\,dy\,dz = \frac{2}{(k+1)(3k+2)(2k+1)\binom{2k}{k}}.\epf
\]
\end{corollary}

\noindent This integral formula we have obtained through asymptotic moment calculations is a special case of the \emph{Selberg formula} \cite[p. 402, Theorem 8.1.1]{AndrewsAskeyRoy} 
\begin{align*}
S_m(\theta,\varphi,\xi)&=\int_0^1\cdots\int_0^1 \prod_{i=1}^m {x_i}^{\theta-1}(1-x_i)^{\varphi-1}
                                                            \prod_{1\le i < j \le m }|x_i-x_j|^{2\xi}\, dx_1\cdots dx_m\\
&=\prod_{j=0}^{m-1}
\frac{\Gamma(\theta + j\xi)\Gamma(\varphi + j\xi)\Gamma(1 + (j+1)\xi)}{\Gamma(\theta+\varphi + (n+j-1)\xi)\Gamma(1+\xi)}.
\end{align*}
We encourage the interested reader to compute $S_3(1,1,k/2)$ with the Selberg formula, and note that this is equivalent to Corollary \ref{cor-Selberg}.

\section{Comparability}\label{sec-comparability}

We now turn to the question of how often two independent and uniformly random bigrassmannian permutations $\pi$ and $\sigma$ are comparable. Previously, comparability odds have been studied for the Bruhat order and the weak order on the collection of \emph{all} permutations $\frakS_n$ \cite{Hammett,HammettPittel}, the poset of integer partitions of $[n]$ under dominance order \cite{Pittel1,Pittel2}, and the poset of set partitions of $[n]$ ordered by refinement \cite{Pittel3}. In each of these studies, exact evaluation of the probabilities in question were either out of reach, and exchanged for upper and lower bound estimates, or only asymptotic evaluation was possible. In our case, we are able to deliver precise formulas.

Our results can be generalized slightly, so in this direction we begin with the following definition.

\begin{definition}
Given integers $k,\ell\ge 0$, we define a \emph{$(k,\ell)$-star} as a $3$-tuple of sets of BG permutations $(\Pi,\{\rho\},\Sigma)$ of size $|\Pi|=k$, $|\{\rho\}|=1$ and $|\Sigma|=\ell$, respectively, such that $\pi>\rho>\sigma$ for all $\pi\in\Pi$ and $\sigma\in\Sigma$. If, say, $k=0$ then the initial set of permutations $\Pi$ is understood to be empty. We shall refer to the ``middle'' element $\rho$ in a $(k,\ell)$-star as the \emph{hub}, and the elements of $\Pi$ ($\Sigma$, respectively) as \emph{upper pendant} (\emph{lower pendant}, respectively) elements. 
\end{definition}

\noindent The motivation for our $(k,\ell)$-star terminology is clear. Indeed, restricting to the sub-BG poset determined by a $(k,\ell)$-star, we get a Hasse diagram that looks exactly like a star with hub $\rho$ as the ``bottleneck'' element. 

We let $\binom{\bG_n}{k}$ denote the collection of $k$-sets of BG elements. Then a uniformly random $k$-set has the same probability $\binom{\bg_n}{k}^{-1}$ of being selected. Let $q_n(k,\ell)$ denote the probability that the uniformly random $3$-tuple of sets from $\binom{\bG_n}{k}\times\binom{\bG_n}{1}\times\binom{\bG_n}{\ell}$ forms a $(k,\ell)$-star. We can now prove the following.

\begin{theorem}\label{thm-q_n(k,l)}
For each $k\ge 0$, $\ell\ge 0$, and $n\to\infty$ we have 
\begin{align*}
q_n(k,\ell)=\frac{\E\l[\binom{\alpha-1}{k}\binom{\beta-1}{\ell}\r]}{\binom{\bg_n}{k}\binom{\bg_n}{\ell}}
    \sim \frac{2\cdot 3^{k+\ell}}{(k+\ell+1)\binom{3(k+\ell)+2}{3k+1}(2k+1)\binom{2k}{k}(2\ell+1)\binom{2\ell}{\ell}}.
\end{align*}
\end{theorem}

\begin{proof}
Let $q_n\(k,\ell\,|\, \rho\)$ denote the probability $q_n(k,\ell)$ conditioned on the hub element $\rho\in\bG_n$. Then clearly $q_n\(k,\ell\,|\, \rho\)=\binom{\alpha(\rho)-1}{k}\binom{\beta(\rho)-1}{\ell}\binom{\bg_n}{k}^{-1}\binom{\bg_n}{\ell}^{-1}$, and so 
\begin{equation}\label{eq-q_n(k,l)}
q_n(k,\ell)=\E\l[q_n\(k,\ell\,|\, \rho\)\r]=\frac{\E\l[\binom{\alpha-1}{k}\binom{\beta-1}{\ell}\r]}{\binom{\bg_n}{k}\binom{\bg_n}{\ell}}.    
\end{equation}
Given $x\in\mathbb{R}$ and integer $y\ge 0$, we write $(x)_y:=x(x-1)\cdots(x-y+1)$. Then using the well-known identity $(x)_y=\sum_{i=1}^y s(y,i)x^i$, 
where $s(y,i)$ is the $i$th (signed) \emph{Stirling number of the first kind} of order $y$, from (\ref{eq-q_n(k,l)}) we obtain 
\begin{align}\label{eq-q_n(k,l)2}
q_n(k,\ell)&=\frac{\E\l[(\alpha-1)_k(\beta-1)_\ell\r]}{(\bg_n)_k(\bg_n)_\ell}\nonumber\\
&=\frac{1}{(\bg_n)_k(\bg_n)_\ell}\sum_{\substack{0\le i\le k \\ 0\le j \le \ell}}s(k+1,i+1)s(\ell+1,j+1)\E\l[\alpha^i\beta^j\r].    
\end{align}
The dominant term in this sum  is the one containing the factor $\E\l[\alpha^k\beta^\ell\r]$, by Theorem \ref{thm-E[a^kb^l]sim}. After keeping only this term and ignoring all others, and some simplification, we have the asymptotic value of $q_n(k,\ell)$.
\end{proof}

\noindent Note that this theorem gives $q_n(0,0)\sim 1$, $n\to\infty$, as it should! We are ready to establish our main results for comparability.

\begin{theorem-bgPermComp}
Let $\pi,\sigma\in\bG_n$ be selected independently and uniformly at random, and let 
\[
p_{n,2}:=\pr\( \pi \textrm{ and } \sigma \textrm{ are comparable} \) \qquad \textrm{and}\qquad  p_{n,2,\leq}:= \pr\( \pi \le \sigma \). 
\]
Then for $n\ge 2$,
\[
p_{n,2}=\frac{\bg_{n+2}-5}{5\bg_n} \qquad \textrm{and}\qquad  p_{n,2,\leq}=\frac{\bg_{n+2}}{10\bg_n}.
\]
\end{theorem-bgPermComp}

\noindent Before proceeding with the proof, we mention that this result and its extension to 3-element multichains 
(Theorem \ref{thm-3chains}) below stand in stark contrast to the analogous ``probability-of-comparability'' questions addressed in \cite{Hammett,HammettPittel}, where for uniformly random and independent $\pi,\sigma,\tau\in\frakS_n$ the respective probabilities that $\pi\le\sigma$ and $\pi\le\sigma\le\tau$ were shown to be $O\(n^{-2}\)$ and $O\(n^{-6}\)$, and hence these probabilities tend to $0$ as $n\to\infty$.

\begin{proof}
This follows from Theorem \ref{thm-q_n(k,l)} using $k=1$ and $\ell=0$, which gives $$q_n(1,0) = \frac{\E\l[\alpha-1\r]}{\bg_n} = \frac{\frac{\bg_{n+2}}{10} - 1}{\bg_n}.$$
Therefore
\[
p_{n,2,\leq} = q_n(1,0) + \frac{1}{\bg_n} = \frac{\bg_{n+2}}{10\bg_n},
\]
and the formula for $p_{n,2}$ now follows easily.
\end{proof}

In fact, by utilizing a partial fraction decomposition, we can show that these probabilities \emph{decrease} with $n$ to their limiting values.

\begin{corollary}\label{cor-bgPermComp}
For $n\ge 2$, we have
\[
p_{n,2}= \frac{1}{5}+\frac{6}{5 n}+\sum_{k\ge 1}\bigg(\frac{12}{5 n^{2k}}-\frac{18}{5 n^{2k+1}}\bigg)
\quad\textrm{and}\quad
p_{n,2,\leq}= \frac{1}{10}+\frac{3}{5 n}+\sum_{k\ge 2}\frac{6}{5 n^k}.
\]
Thus $p_{n,2}\downarrow \frac{1}{5}$ and $p_{n,2,\leq}\downarrow \frac{1}{10}$ as $n\to\infty$.
\end{corollary}

\begin{proof}
By Theorem \ref{thm-bgPermComp}, for $n\ge 2$ we have
\[
p_{n,2}=\frac{\bg_{n+2}-5}{5\bg_n}
=\frac{1}{5}+\frac{24}{5n}-\frac{3}{n+1}-\frac{3}{5(n-1)}
=\frac{1}{5}+\frac{6}{5 n}+\sum_{k\ge 1}\bigg(\frac{12}{5 n^{2k}}-\frac{18}{5 n^{2k+1}}\bigg).
\]
The derivation for $p_{n,2,\leq}$ is similar.
\end{proof}

\begin{corollary}\label{cor-bg2chains}
Let $c_{n,2,\leq}$ denote the number of $2$-element multichains 
in the poset $(\bG_n,\le)$, i.e. the number of pairs $(\pi,\sigma)\in\bG_n^2$ such that $\pi\le\sigma$. Then
\[
c_{n,2,\leq}=\bg_n^2 p_{n,2,\leq}=\frac{\bg_n\bg_{n+2}}{10}.\epf
\]
\end{corollary}

\begin{definition}
Given an $r$-tuple $(\pi_1,\ldots,\pi_r)\in\bG_n^r$, we say that ``$\pi_1,\ldots,\pi_r$ are comparable'' if and only if there is some re-ordering of the $\pi_i$s that is an $r$-element multichain 
in the BG poset. That is, if and only if there exists $\phi\in\frakS_r$ such that $\pi_{\phi(1)}\le\cdots\le\pi_{\phi(r)}$. 
\end{definition}

\noindent Note that this is consistent with our notion of comparable pairs. The following is a consequence of Theorem \ref{thm-asymp beta pdf}, but we present an alternate constructive argument in the discrete setting. 

\begin{theorem}
Let $r$ be a fixed positive integer. Then the probability that a uniformly random $r$-tuple $(\pi_1,\ldots,\pi_r)$ is comparable is bounded away from $0$ as $n \to \infty$.
\end{theorem}

\begin{proof}
Let $\pi_1$ satisfy $\ell_i \in [n/4,3n/4]$. There are $\Omega(n^3)$ such $\pi_1$, and for each of these create possible $\pi_2$ by removing $j_1,j_4 \in [n/32,n/16]$ from respective coordinates $\ell_1,\ell_4$ and adding (some partition of) $j_1+j_4$ to $\ell_2$ and $\ell_3$.  This produces $\Omega(n^3)$ possible $\pi_2$ for each $\pi_1$.  Iterating, we produce $\Omega(n^{3r})$ distinct $r$-element multichains 
$\pi_1 \leq \pi_2 \leq \cdots \leq \pi_r$.
\end{proof}

As with pairs, we obtain exact results for triples, here by way of the calculation for $\E\l[\alpha\beta\r]$ (Theorem \ref{thm-Eab}). Our methods do not easily generalize to give exact results for $r$-tuples where $r \geq 4$.

\begin{theorem-3chains}
Let $\pi, \sigma, \tau\in \bG_n$ be selected independently and uniformly at random, and let
\[
p_{n,3}:=\pr\( \pi,\sigma,\tau \textrm{ are comparable} \) \qquad \textrm{and}\qquad  p_{n,3,\leq}:= \pr\( \pi \le \sigma \le \tau \). 
\]
Then for $n\ge 2$,
\[
p_{n,3}= \frac{(n^2+4n+6)\bg_{n+3}\bg_{n+6} - 42(n+6)(n+7)\bg_{n+2} + 420(n+6)(n+7)}{70(n+6)(n+7)\bg_n^2}
\]
and
\[
p_{n,3,\leq}=\frac{(n^2+4 n+6)\bg_{n+3}\bg_{n+6}}{420(n+6)(n+7)\bg_n^2}.
\]
\end{theorem-3chains}

\begin{proof}
Here, we have $$q_n(1,1) = \frac{\E\l[(\alpha-1)(\beta-1)\r]}{\bg_n^2} = \frac{\E\l[\alpha\beta - \alpha - \beta + 1\r]}{\bg_n^2},$$
and so
\[
p_{n,3,\leq} = q_n(1,1) + \frac{\E\l[\alpha-1\r] + \E\l[\beta-1\r] + 1}{\bg_n^2} = \frac{\E\l[\alpha\beta\r]}{\bg_n^2}=\frac{(n^2+4 n+6)\bg_{n+3}\bg_{n+6}}{420(n+6)(n+7)\bg_n^2}.
\]
Furthermore,
\begin{align*}
p_{n,3} &= 6\pr(\pi\leq\sigma\leq \tau) - 6 \pr(\pi \leq \sigma, \sigma=\tau) + 6\pr(\pi=\sigma=\tau)\\
&=6\left( \frac{(n^2+4 n+6)\bg_{n+3}\bg_{n+6}}{420(n+6)(n+7)\bg_n^2} - \frac{\bg_{n+2}}{10\bg_n^2}+\frac{1}{\bg_n^2}\right)\\
&=\frac{(n^2+4n+6)\bg_{n+3}\bg_{n+6} - 42(n+6)(n+7)\bg_{n+2} + 420(n+6)(n+7)}{70(n+6)(n+7)\bg_n^2}.\qedhere
\end{align*} 
\end{proof}

Analogous to Corollary \ref{cor-bgPermComp}, we use a partial fraction decomposition to obtain the following, which shows that again these probabilities \emph{decrease} with $n$ to their limiting values.

\begin{corollary}\label{cor-bgPermComp3}
For $n\ge 2$, we have
%
\[
p_{n,3} = \frac{1}{70} + \frac{36}{140n} + \frac{270}{140n^2} + \frac{612}{140n^3} + \sum_{k \geq 2}\left( \frac{28224k-56610}{140n^{2k}} - \frac{2160k-2772}{140n^{2k+1}}\right)
\]
and
\[
p_{n,3,\leq}= \frac{1}{420} + \frac{6}{140n} + \frac{45}{140n^2} +  \sum_{k \geq 2}\left( \frac{648k-1110}{140n^{2k-1}} + \frac{672k-867}{140n^{2k}}\right).
\]
Thus $p_{n,3} \downarrow \frac{1}{70}$ and $p_{n,3,\leq}\downarrow \frac{1}{420}$ as $n\to\infty$. 
\end{corollary}

\begin{proof}
We have
\begin{align*}
p_{n,3,\leq}&=\frac{1}{420}+\frac{117}{35n}+\frac{69}{280(n+1)}-\frac{993}{280(n-1)}+\frac{12}{7n^2}+\frac{3}{70(n+1)^2}+\frac{33}{14(n-1)^2}\\
&= \frac{1}{420} + \frac{6}{140n} + \frac{45}{140n^2} +  \sum_{k \geq 2}\left( \frac{648k-1110}{140n^{2k-1}} + \frac{672k-867}{140n^{2k}}\right).
\end{align*}
Similarly,
\begin{align*}
p_{n,3} &= \frac{1}{70} - \frac{2736}{140 n} - \frac{19863}{140 (n-1)} + \frac{22635}{140 (n+1)} 
+ \frac{28656}{140 n^2} + \frac{6516}{140 (n-1)^2} + \frac{7596}{140 (n+1)^2} \\
&= \frac{1}{70} + \frac{36}{140n} + \frac{270}{140n^2} + \frac{612}{140n^3} - \frac{162}{140n^4} - \frac{1548}{140n^5}\\
&\qquad\qquad\qquad
+ \sum_{k \geq 3}\left( \frac{28224k-56610}{140n^{2k}} - \frac{2160k-2772}{140n^{2k+1}}\right).
\end{align*}
To see that $p_{n,3} \downarrow \frac{1}{70}$, note that for $n \geq 2$ we have $\frac{270}{140n^2} - \frac{162}{140n^4} > 0$ and $\frac{612}{140n^3} - \frac{1548}{140n^5} > 0$.  Also, when $n \geq 2$ and $k \geq 3$, we have $\frac{n(28224k-56610)-(2160k-2772)}{140n^{2k+1}} > 0$.
\end{proof}

\begin{corollary}\label{cor-3chains}
Let $c_{n,3,\leq}$ denote the number of $3$-element multichains 
in the poset $(\bG_n,\leq)$, i.e., the number of triples $(\pi,\sigma,\tau) \in \bG_n^3$ such that $\pi \leq \sigma \leq \tau$.  Then
\[
c_{n,3,\leq}=\bg_n^3 p_{n,3,\leq}=\frac{(n^2+4 n+6)\bg_n\bg_{n+3}\bg_{n+6}}{420(n+6)(n+7)}. \epf
\]
\end{corollary}

For $r$ independent and uniformly random elements $\pi_1,\ldots,\pi_r \in \bG_n$, let $p_{n,r}$ be the probability that they are comparable, and $p_{n,r,\leq}$ the probability that $\pi_1\leq \cdots \leq \pi_r$. It would be interesting to find the exact, or asymptotic, values of $p_{n,r}$ and $p_{n,r,\leq}$ for all $r \geq 4$.

\medskip

\noindent \textbf{Acknowledgements:} We would like to thank Nathan Reading for helpful comments. We also would like to thank the two anonymous referees, whose collective comments much improved the content and readability of the paper. The idea to consider the M\"obius function on intervals of $\bG_n$ came from one of these referees.



\begin{thebibliography}{10}

\bibitem{AndrewsAskeyRoy} G. Andrews, R. Askey, R. Roy, \newblock {\em Special functions}, \newblock Encyclopedia of Mathematics and its Applications {\bf 71}, \newblock Cambridge University Press, Cambridge, 1999.

\bibitem{Balcza} L. Balcza, \newblock Sum of lengths of inversions in permutations, \newblock {\em Discrete Math.} {\bf 111} (1993), 41--48.

\bibitem{Be1} A. Bernini and L. Ferrari, Vincular pattern posets and the M\"obius function of the quasi-consecutive pattern poset, {\em Ann. Combin.} {\bf 21} (2017), 519--534.

\bibitem{Be2} A Bernini, L. Ferrari, and E. Steingr\'imsson, The M\"obius function of the consecutive pattern poset, {\em Electron. J. Combin.} {\bf 18} (2011), \#P146.

\bibitem{Birkhoff} G. Birkhoff, \newblock Rings of sets, \newblock {\em Duke Math. J.} {\bf 3(3)} (1937), 443--454.

\bibitem{BjornerBrenti2} A. Bj\"orner and F. Brenti, \newblock An improved tableau criterion for Bruhat order, \newblock {\em Electron. J. Combin.} {\bf 3} (1996), \#R22.

\bibitem{BjornerBrenti1} A. Bj\"orner and F. Brenti, \newblock {\em Combinatorics of Coxeter Groups}, \newblock Springer, New York, 2005.

\bibitem{Bjorner} A. Bj\"orner, \newblock {\em Orderings of Coxeter Groups}, in {\em Combinatorics and Algebra} (C. Greene, ed.), Vol. 34 of {\em Contemp. Math.}, \newblock American Mathematical Society, Providence, RI, 1984 pp. 175--195.

\bibitem{Bj} A. Bj\"orner, The M\"obius function of factor order, {\em Theor. Comput. Sci.} {\bf 117} (1993), 91--98.

\bibitem{Bressoud} D.~M. Bressoud, \newblock {\em Proofs and Confirmations: The Story of the Alternating Sign Matrix Conjecture}, \newblock MAA, Cambridge University Press, Cambridge, 1999.

\bibitem{BrSc} R. Brualdi and M. Schroeder, Alternating sign matrices and their Bruhat order, {\em Discrete Math.} {\bf 340} (2017), 1996--2019.

\bibitem{Deodhar} V. Deodhar, \newblock Some characterizations of Bruhat ordering on a Coxeter group and determination of the relative M\"obius function, \newblock {\em Inventiones Math.} \textbf{39} (1977), 187--198.

\bibitem{Ehresmann} C. Ehresmann, \newblock Sur la topologie de certains espaces homog\'enes, \newblock {\em Ann. of Math. (2)} \textbf{35} (1934), 396--443. 

\bibitem{EngbersHammett} J. Engbers and A. Hammett, \newblock Trivial meet and join within the lattice of monotone triangles, \newblock {\em Electron. J. Combin.} \textbf{21(3)} (2014), \#P3.13.

\bibitem{Feller} W. Feller, \newblock {\em An Introduction to Probability Theory and its Applications, Vol. 1 (3rd Ed.)}, \newblock Wiley, New York, 2008.

\bibitem{FlajoletSedgewick} P. Flajolet and R. Sedgewick, \newblock {\em Analytic Combinatorics}, \newblock Cambridge University Press, Cambridge, 2009.

\bibitem{GeckKim} M. Geck and S. Kim, \newblock Bases for the Bruhat-Chevalley order on all finite Coxeter groups, {\em J. Algebra} {\bf 197} (1997), 278--310.

\bibitem{Hammett} A. Hammett, \newblock On comparibility of random permutations, \newblock Ph.~D. Thesis, The Ohio State University, 2007.

\bibitem{HammettPittel} A. Hammett and B. Pittel, \newblock How often are two permutations comparable? \newblock {\em Trans. Amer. Math. Soc.} \textbf{360(9)} (2008), 4541--4568.

\bibitem{Kobayashi2} M. Kobayashi, Bijection between bigrassmannian permutations maximal below a permutation and its essential set, {\em Electron. J. Combin.} {\bf 17} (2010) \#N27.

\bibitem{Kobayashi} M. Kobayashi, \newblock Enumeration of bigrassmannian permutations below a permutation in Bruhat order, \newblock {\em Order} \textbf{28(1)} (2011), 131--137.

\bibitem{Kobayashi3} M. Kobayashi, 3412,4231 patterns produce singular points of essential sets, {\em Saitama Math. J.} {\bf 28} (2011), 13--23.

\bibitem{Kobayashi4} M. Kobayashi, More combinatorics of Fulton's essential set, {\em Int. Math. Forum} {\bf 8} (2013), 1735--1760.

\bibitem{LascouxSchutz} A. Lascoux and M.~P. Sch\"utzenberger, \newblock Treillis et bases des groupes de Coxeter, \newblock {\em Electron. J. Combin.} \textbf{3} (1996), \#R27.

\bibitem{Pittel3} B. Pittel, \newblock Random set partitions: asymptotics of subset counts, \newblock {\em J. Comb. Theory Ser. A} {\bf 79} (1997), 326--359.

\bibitem{Pittel2} B. Pittel, \newblock Confirming two conjectures about the integer partitions, \newblock {\em J. Comb. Theory Ser. A} {\bf 88} (1999), 123--135.

\bibitem{Pittel1} B. Pittel, \newblock Asymptotic joint distribution of the extremities of a random Young diagram and enumeration of graphical partitions, \newblock {\em Preprint} (2016), \url{http://arxiv.org/abs/1603.00923}.

\bibitem{Propp} J. Propp, \newblock The many faces of alternating-sign matrices, \newblock {\em Discrete Math. Theor. Comput. Sci. Proc.} {\bf AA (DM-CCG)} (2001), 43--58.

\bibitem{Reading} N. Reading, \newblock Order dimension, strong Bruhat order, and lattice properties for posets, \newblock {\em Order} {\bf 19} (2002), 73--100.

\bibitem{RWY} V. Reiner, A. Woo, and A. Yong, Presenting the cohomology of a Schubert variety, {\em Trans. Amer. Math. Soc.} {\bf 363(1)} (2011), 521--543.

\bibitem{Stanley1} R.~P. Stanley, \newblock {\em Enumerative Combinatorics, Vol. 1}, \newblock Cambridge University Press, Cambridge, 1997.

\bibitem{Stanley2} R.~P. Stanley, \newblock {\em Enumerative Combinatorics, Vol. 2}, \newblock Cambridge University Press, Cambridge, 1999.

\bibitem{Stembridge3} J.~R. Stembridge, \newblock On the fully commutative elements of Coxeter groups, \newblock {\em J. Algebraic Combin.} \textbf{5(4)} (1996), 353--385.

\bibitem{Stembridge2} J.~R. Stembridge, \newblock The enumeration of fully commutative elements of Coxeter groups, \newblock {\em J. Algebraic Combin.} \textbf{7(3)} (1998), 291--320.

\bibitem{Stembridge1} J.~R. Stembridge, \newblock Minuscule elements of Weyl groups, \newblock {\em J. Algebra} \textbf{235(2)} (2001), 722--743.

\bibitem{WY} A. Weigandt and A. Yong, The Prism tableau model for Schubert polynomials, {\em J. Combin. Theory Ser. A} {\bf 154} (2018), 551--582.

\bibitem{W} R. Willenbring, The M\"obius function of generalized factor order, {\em Discrete Math.} {\bf 313} (2013), 330--347.

\bibitem{Zeilberger} D. Zeilberger, \newblock Proof of the alternating sign matrix conjecture, \newblock {\em Electron. J. Combin.} \textbf{3(2)} (1996), \#R13.

\end{thebibliography}
\end{document}